\documentclass[11pt]{amsart}
\usepackage{amssymb,latexsym}
\usepackage{amsmath,color}
\usepackage{amsthm}
\usepackage{epsfig}
\usepackage{graphicx}
\usepackage{hyperref}
\usepackage{titletoc}
\usepackage{psfrag}
\usepackage{palatino,mathpazo}

\usepackage[margin=1.5in]{geometry}

\numberwithin{equation}{section}
\newtheorem{theorem}{Theorem}[section]
\newtheorem{proposition}[theorem]{Proposition}
\newtheorem{lemma}[theorem]{Lemma}

\newtheorem{definition}{Definition}[section]

\theoremstyle{definition}

\newtheorem{remark}{Remark}[section]

\def\Xint#1{\mathchoice
	{\XXint\displaystyle\textstyle{#1}}%
	{\XXint\textstyle\scriptstyle{#1}}%
	{\XXint\scriptstyle\scriptscriptstyle{#1}}%
	{\XXint\scriptscriptstyle\scriptscriptstyle{#1}}%
	\!\int}
\def\XXint#1#2#3{{\setbox0=\hbox{$#1{#2#3}{\int}$}
		\vcenter{\hbox{$#2#3$}}\kern-.5\wd0}}

\def\dashint{\Xint-}

\def\ou{\oveline{u}}

\def\e{\varepsilon}
\def\B{\mathbb{R}}

\def\Rm{\mathbb{R}^{2m}}

\def\Xint#1{\mathchoice
	{\XXint\displaystyle\textstyle{#1}}%
	{\XXint\textstyle\scriptstyle{#1}}%
	{\XXint\scriptstyle\scriptscriptstyle{#1}}%
	{\XXint\scriptscriptstyle\scriptscriptstyle{#1}}%
	\!\int}
\def\XXint#1#2#3{{\setbox0=\hbox{$#1{#2#3}{\int}$ }
		\vcenter{\hbox{$#2#3$ }}\kern-.6\wd0}}

\def\dashint{\Xint-}

\renewcommand{\a }{\alpha}
\renewcommand{\t }{\tau }
\newcommand{\M}{M^{\mbox{\tiny{R}}}}

\begin{document}
\title[$Q$-curvature with conical singularities]{Prescribing $Q$-curvature on even-dimensional manifolds with conical singularities}
	
\author[A. Jevnikar]{ Aleks Jevnikar}
\address{Aleks Jevnikar, Department of Mathematics, Computer Science and Physics, University of Udine, Via delle Scienze 206, 33100 Udine, Italy}
\email{aleks.jevnikar@uniud.it}
	
\author[Y. Sire]{Yannick Sire}	
\address{Yannick Sire, Department of Mathematics, Johns Hopkins University, 3400 N. Charles Street,
Baltimore, MD 21218, US}
\email{ ysire1@jhu.edu}
	
\author[W. Yang]{Wen Yang}
\address{Wen Yang, Department of Mathematics, Faculty of Science and Technology, University of Macau, Macau}
\email{wenyang@um.edu.mo}

\thanks{A. Jevnikar is partially supported by INdAM-GNAMPA project 'Analisi qualitativa di problemi differenziali non lineari' and PRIN Project 20227HX33Z 'Pattern formation in nonlinear phenomena' funded by European Union - Next Generation EU within the PRIN 2022 program.
Y. Sire is partially supported by NSF DMS grant $2154219$, ``Regularity {\em vs} singularity formation in elliptic and parabolic equations". W. Yang is supported by National
Key R\&D Program of China 2022YFA1006800, NSFC, China No.
12171456, NSFC, China No. 12271369, FDCT No. 0070/2024/RIA1,
Multi-Year Research Grant No. MYRG-GRG2024-00082-FST and Startup
Research Grant No. SRG2023-00067-FST}

\begin{abstract}
On a $2m$-dimensional closed manifold we investigate the existence of prescribed $Q$-curvature metrics with conical singularities. We present here a general existence and multiplicity result in the supercritical regime. To this end, we first carry out a blow-up analysis of a $2m$th-order PDE associated to the problem and then apply a variational argument of min-max type. For $m>1$, this seems to be the first existence result for supercritical conic manifolds different from the sphere.
\end{abstract}
\maketitle
{\bf Keywords}: $Q$-curvature, conical singularities, blow-up analysis, variational methods

\section{Introduction}
In conformal geometry, one of the most fundamental problems is understanding the relationship between conformally covariant operators, their associated conformal invariants and the related PDEs. 

As a first example, let us consider the Laplace-Beltrami operator in two dimensions on a closed surface $(M,g)$ and the  Gaussian curvature. If we want to prescribe the curvature $K$ through a conformal change of metric $g_v=e^{2v}g$, we have the associated PDE
\begin{equation}
	\label{1.curvature-q}
	-\Delta_gv+K_g=Ke^{2v},
\end{equation}
where $\Delta_g$ denotes the Laplace-Beltrami operator with respect to the background metric $g$ and $K_g$, $K=K_{g_v}$ are the Gaussian curvatures of the metric $g$ and ${g_v}$, respectively. Observe that the latter equation yields in particular the conformal invariance of the total Gaussian curvature which is then tight to the topology of the surface via the Gauss-Bonnet formula 
$$
\int_M K_g\,dvol_g=\chi(M).
$$ 
Here, $\chi(M)$ is the Euler characteristic of the surface. 

A classical issue here is the prescribed Gaussian curvature problem or the Uniformization Theorem about the existence of a conformal metric in the conformal class of $g$ with prescribed (possibly constant) curvature. This amounts to solve the PDE in \eqref{1.curvature-q} which has been systematically studied since the works of Berger \cite{berger}, Kazdan-Warner \cite{kw} and Chang-Yang \cite{cy1,cy2}.

\

In higher dimensions, we have the so-called GJMS operators $P_g^{2m}$ and the related $Q$-curvatures $Q_g^{2m}$ which are the higher-order analogues of the Laplace-Beltrami operator and the Gaussian curvature for $2m$-dimensional closed manifolds, see \cite{gjms,GZ}. These are conformally covariant differential operators whose leading term is $(-\Delta_g)^m$.
In particular, when $m=1$ we recover the Laplace-Beltrami operator and the Gaussian curvature. Moreover, for $m=2$, $P_g^{4}$ and $Q_g^4$ are related to the Paneitz operator and the standard $Q$-curvature:
\begin{equation}
\label{1.paneitz}
\begin{aligned}
P_g^4f&=P_g f=\Delta_g^2f+\mbox{div}_g\left(\frac23R_gg-2\mbox{Ric}_g\right)df,\\
Q_g^4&=2Q_g=-\frac1{6}(\Delta_gR_g-R_g^2+3|\mbox{Ric}_g|^2),	
\end{aligned}
\end{equation}
where $\mbox{Ric}_g$ and $R_g$ stand for the Ricci tensor and the scalar curvature of the manifold $(M,g)$. See the original works of Paneitz \cite{pan1,pan2} and Branson \cite{branson} for more details. 

\

The family of GJMS operators and the related $Q$-curvature functions play now an important role in modern differential geometry. As in the lower order case, if we want to prescribe the $Q-$curvature $Q_{g_v}^{2m}=Q$ through a conformal transformation $g_v=e^{2v}g$, then $P_g^{2m}$ and $Q_g^{2m}$ satisfy the following laws
\begin{equation}
	\label{1.higher}
	P_{g_v}^{2m}=e^{-2mv}P_g^{2m},\quad P_g^{2m}v+Q^{2m}_g=Qe^{2mv}.
\end{equation}
The prescribed $Q$-curvature problem is in thus related to the solvability of \eqref{1.higher}. 

\medskip

One can attack this problem variationally by looking at the critical points of the associated energy functional. A lot of work has been done in this direction, in particular for the four-dimensional case and the Paneitz operator \eqref{1.paneitz}. In this setting, assuming 
$$ 
P_g\geq0, \quad \mbox{Ker}\{P_g\}=\{constants\},
$$ 
the problem has been first solved by Chang-Yang \cite{cy3} for 
$$
\int_M Q^{4}_g \,dvol_g=2\int_M Q_g \,dvol_g<16\pi^2=2\int_{\mathbb{S}^4}Q_{g_0}\,dvol_{g_0}.
$$ 
Here, $g_0$ is the standard metric of the sphere. See also the related work of Gursky \cite{gursky}. This is the so-called subcritical case in which the energy functional is coercive and bounded from below by means of the Adams-Trudinger-Moser inequality \cite{adams} and solutions corresponds to global minima using the direct methods of the calculus of variations.  We refer to the discussion in the sequel for the precise definition of the subcritical, critical and supercritical case. The supercritical case $\int_M Q^{4}_g \,dvol_g>16\pi^2$, where the energy functional fails to be bounded from below, has been considered by Djadli-Malchiodi \cite{dm} via a new min-max method based on improved versions of the Adams-Trudinger-Moser inequality \cite{aubin2013some,ChenLi1}, solving the problem provided
$$
	\mbox{Ker}\{P_g\}=\{constants\}, \quad \int_M Q^{4}_g \,dvol_g \notin 16\pi^2\mathbb{N}.
$$
Finally, some existence results for the critical case $\int_M Q^{4}_g \,dvol_g \in 16\pi^2\mathbb{N}$ have been derived by Ndiaye \cite{ndiaye2} by making use of the critical point theory at infinity jointly with a blow-up analysis.

\medskip

As far as the higher-dimensional case $2m>4$ is concerned, the subcritical case has been solved in \cite{brendle} via a geometric
flow, while the Djadli-Malchiodi's argument has been generalized by Ndiaye \cite{ndiaye} to treat the supercritical case.

\

In this paper we are interested in prescribing the $Q$-curvature on a general $2m$-dimensional closed manifold $M$ with conical singularities. Let $g$ be a smooth metric on $M$. We will say that a point $q\in M$ is a conical singularity of order $\alpha\in(-1,+\infty)$ for the new metric $g_v=e^{2v}g$ if  
$$
	g_v(x)=f(x)|x|^{2\alpha}|dx|^2 \quad \mbox{locally around } q,
$$
for some smooth function $f$. The set of conical singularities $q_j$ of orders $\alpha_j$ is encoded in the formal sum
$$
	D=\sum_{j=1}^N \alpha_j q_j,
$$
while $(M,D)$ will denote the related conical manifold. We define
$$
	\kappa_{g}=\int_MQ_{g}^{2m}\,dvol_{g}, \quad \kappa_{g_v}=\int_MQ_{g_v}^{2m}\,dvol_{g_v},
$$
for which the following relation holds
\begin{equation} \label{kappa}
\kappa_{g_v}=\kappa_{g}+\dfrac{\Lambda_m}{2} \sum_{j=1}^N \alpha_j,
\end{equation}
where $\Lambda_m=(2m-1)!|\mathbb{S}^{2m}|$, see for example Theorem~\ref{CGB}. The critical threshold of a singular manifold is essentially related to the singular Adams-Trudinger-Moser inequality stated in Theorem \ref{ATM}. In the spirit of Troyanov \cite{troy} we let
$$
\t(M,D)=\Lambda_m\Bigr(1+\min_j\left\{\a_j,0\right\}\Bigr)
$$
and give the following classification.

\begin{definition} The singular manifold $(M,D)$ is said to be:
$$\begin{array}{ll}
\hbox{subcritical}&\hbox{if} \quad \kappa_{g_v}<\t(M,D)\\
\hbox{critical}&\hbox{if} \quad \kappa_{g_v}=\t(M,D)\\
\hbox{supercritical}&\hbox{if} \quad \kappa_{g_v}>\t(M,D).
\end{array}$$
\end{definition}
See also the recent work of Fang-Ma \cite{FangMa} for a similar discussion. We point out we have a slightly different notation for $\Lambda_m$ with respect to the latter paper.

\

Due to the singular behavior of the conformal factor $v$ around a conical point, prescribing the $Q$-curvature on a manifold with conical singularities at $q_j\in M$ of order $\alpha_j\in(-1,+\infty)$ is related to the solvability of the following singular PDE
\begin{equation}
\label{1.s-mliouville}
P_g^{2m}v+Q_g^{2m}=Q_{g_v}^{2m}e^{2mv}-\frac{\Lambda_m}{2}\sum_{j=1}^N\alpha_j\delta_{q_j},
\end{equation}
where $\delta_{q_j}$ stands for the Dirac measure located at the point $q_j\in M$. One may desingularize the behavior of $v$ around the conical points by considering
\begin{equation*}
u=v+\frac{\Lambda_m}{2}\sum_{j=1}^N\alpha_jG(x,q_j),
\end{equation*}
where $G(x,p)$ is the Green function of $P_g^{2m}$, see for example Lemma \ref{expGreen}. Then $u$ satisfies
\begin{equation}
\label{1.mliouville}
P_g^{2m}u+Q_g^{2m}+\dfrac{\Lambda_m}{2|M|} \sum_{j=1}^N \alpha_j=\widetilde Q e^{2mu},
\end{equation}
where 
\begin{equation} \label{qtilde}
\widetilde Q=Q_{g_v}^{2m}e^{-m\Lambda_m\sum_{j=1}^N\alpha_jG(x,q_j)},
\end{equation}	
which is now singular at the points $q_j$.

\medskip

The singular equation \eqref{1.mliouville} has been studied mainly in the two-dimensional case, that is in relation to the prescribed Gaussian curvature problem. After the initial work of Troyanov \cite{troy}, there have been contributions by many authors, as for example \cite{ChenLi1,ChenLi2,ChenLi3,LuoTian,McOwen}. This problem has received a lot of attention also in recent years, see \cite{bdm,bm,bt,cm,mr}. See also \cite{eremenko,MazzeoZhu,MondPanov1,MondPanov2} for further developments in this direction. 

\medskip

In the higher-dimensional case $m>1$ there are very few results available. The subcritical regime have been just recently solved by Fang-Ma \cite{FangMa}, where the four-dimensional case is considered. The authors point out their method could be applied for higher dimensions too. In any case, the existence here follows by direct methods of the calculus of variations once the singular Adams-Trudinger-Moser inequality in Theorem \ref{ATM} is derived. See also \cite{hlw} for a related result on the sphere via a fixed point argument. For a blow-up analysis in dimension four we refer instead to \cite{AhmedouWuZhang}. Concerning the existence problem in the supercritical case, the only result we are aware of is \cite{hmm} where the authors consider a slightly supercritical problem on the sphere, again with a fixed point argument in the spirit of \cite{hlw}.

\medskip

The goal of this paper is to give a first general existence result for $2m$-dimensional conic manifolds in the supercritical regime. We define a critical set of values $\Gamma$ as follows:
\begin{equation} \label{gamma}
\Gamma=\left\{n\Lambda_m+\Lambda_m\sum_{i\in J}(1+\alpha_i)\mid n\in\mathbb{N}\cup\{0\}\quad\mathrm{and}\quad
J\subset\{1,\dots,N\}\right\}.
\end{equation}
Observe that if $\alpha_j\in\mathbb{N}$ for all $j$, then we simply have $\Gamma=\Lambda_m\mathbb{N}$. Recall now the definition of the total curvature $\kappa_{g}$ given before \eqref{kappa}. Let $\M\subset M$ be a closed $n$-dimensional submanifold, $n\in[1,2m)$, such that the singular points $q_j\notin\M$ for all $j=1,\dots,N$. Then, we have:
\begin{theorem}
\label{th1.2}
Let $(M,D)$ be a supercritical singular $2m$-dimensional closed manifold with $\alpha_j>0$ for $j=1,\dots,N$ and let $Q$ be a smooth positive function on $M$. Suppose that there exists a retraction $R:M\to \M$, with $\M\subset M$ as above. If moreover
$$
\mbox{Ker}\{P_g^{2m}\}=\{constants\}, \quad \kappa_{g}+\dfrac{\Lambda_m}{2} \sum_{j=1}^N \alpha_j\notin\Gamma,
$$ 
then there exists a conformal metric on $(M,D)$ with $Q^{2m}$-curvature equal to $Q$.
\end{theorem}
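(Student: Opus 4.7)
My plan is to adopt a min-max variational scheme for the desingularized equation \eqref{1.mliouville}, in the spirit of Djadli-Malchiodi \cite{dm} and its higher-order extension by Ndiaye \cite{ndiaye}, suitably adapted to the singular weight $\widetilde Q$ in \eqref{qtilde}. Setting $\rho=\kappa_{g_v}$, the relevant Adams-type functional reads
\begin{equation*}
II_\rho(u) = \frac{m}{\Lambda_m}\langle P_g^{2m} u, u\rangle + 2m\int_M \Bigl(Q_g^{2m} + \frac{\Lambda_m}{2|M|}\sum_{j=1}^N \alpha_j\Bigr) u\, dvol_g - \rho \log \int_M \widetilde Q\, e^{2mu}\, dvol_g,
\end{equation*}
on the Sobolev space naturally associated to $P_g^{2m}$. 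Under the hypothesis $\mathrm{Ker}\{P_g^{2m}\}=\{\text{constants}\}$, critical points of $II_\rho$ solve \eqref{1.mliouville} with $Q_{g_v}^{2m}$ proportional to $Q$ and thus, after a harmless rescaling of the metric, deliver the desired conformal metric. The singular Adams-Trudinger-Moser inequality of Theorem \ref{ATM} yields coercivity of $II_\rho$ only for $\rho<\t(M,D)$, so in the supercritical range assumed here the functional is unbounded below and the direct method fails.

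The next ingredient is a blow-up analysis for sequences of solutions of the $\rho$-problem as $\rho_n\to\rho$. If $u_n$ blows up, the measures $\rho_n\,\widetilde Q\, e^{2mu_n}\, dvol_g$ converge weakly to a regular $L^1$ part plus concentrated Dirac masses $\sum_k \sigma_k \delta_{p_k}$, where the classification of entire finite-volume solutions of $(-\Delta)^m w = e^{2mw}$ on $\Rm$ (and their singular counterparts near $q_i$) combined with a Pohozaev identity for $P_g^{2m}$ forces the quantization $\sigma_k=\Lambda_m$ if $p_k\notin\{q_1,\dots,q_N\}$ and $\sigma_k=\Lambda_m(1+\alpha_i)$ if $p_k=q_i$. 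Combined with the regular residual mass, which is itself a multiple of $\Lambda_m$, this constrains the limiting total $\rho$ to lie in the set $\Gamma$ of \eqref{gamma}. Since $\rho=\kappa_{g_v}\notin\Gamma$ by assumption, no blow-up can occur and the set of solutions is compact at this $\rho$.

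With compactness secured, the heart of the argument is a topological min-max. I would first derive an improved Adams-Trudinger-Moser inequality: if the normalized density $\widetilde Q\, e^{2mu}/\int \widetilde Q\, e^{2mu}$ distributes a definite mass over $k+1$ mutually well-separated regions, then the Adams constant $\Lambda_m$ may be upgraded to $(k+1)\Lambda_m$. The positivity hypothesis $\alpha_j>0$ makes $\widetilde Q$ vanish at each $q_j$ and thereby rules out concentration at the cone points, which is crucial for the projection step. Choosing the largest integer $k$ with $k\Lambda_m<\rho$, very low sublevels $\{II_\rho\le-L\}$ force the normalized measure to concentrate on at most $k$ points of $M$, giving via the retraction $R:M\to\M$ a continuous projection onto the space $\M_k$ of formal barycenters of $\M$ of order $k$. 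Conversely, test functions built from standard $Q$-curvature bubbles centered on $k$-tuples of points in $\M$ realize a map $\M_k\to\{II_\rho\le-L\}$ whose composition with the projection is homotopic to the identity on $\M_k$. Since $\M$ is a retract of the non-trivial manifold $M$, $\M_k$ is non-contractible and the construction produces a non-trivial min-max class for $II_\rho$.

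Finally, since $II_\rho$ need not satisfy the Palais-Smale condition in general, to extract an actual critical point I would apply Struwe's monotonicity trick to the family $\rho\mapsto II_\rho$: this produces a bounded Palais-Smale sequence, and hence a solution, for almost every $\rho$ in a small neighborhood of $\kappa_{g_v}$. The blow-up compactness of the previous step, valid precisely because $\kappa_{g_v}\notin\Gamma$, allows passing to the limit along such a sequence and yields a critical point of the original $II_\rho$, concluding the proof. The main obstacle is the blow-up analysis in the $2m$-th order singular setting: establishing the Pohozaev-type quantization in presence of the singular weight, excluding residual concentration at the cone points via the vanishing of $\widetilde Q$ and the integrability properties of the Green function weight, and proving the improved Adams-Trudinger-Moser inequality with sharp control of the constants on conic $2m$-manifolds. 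Once these analytic ingredients are in place, the variational scheme follows a by-now classical pattern.
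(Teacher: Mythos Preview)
Your proposal is correct and follows essentially the same scheme as the paper: compactness via Pohozaev-type quantization (Theorem~\ref{th1.1}), improved Adams--Trudinger--Moser inequalities to project low sublevels onto $\M_k$ through the retraction $R$, bubble test functions centered on $\M$ for the reverse map, and a min-max on the cone over $\M_k$ completed by Struwe's monotonicity trick. Two minor clarifications: the paper's blow-up analysis actually shows \emph{full} concentration (no residual $L^1$ mass, $u_k\to-\infty$ off the blow-up set) rather than a residual multiple of $\Lambda_m$, and the hypothesis $\alpha_j>0$ is used not to exclude concentration at the cone points in the blow-up step---which is in fact allowed, with quantized mass $\Lambda_m(1+\alpha_j)$---but rather to keep the Adams--Trudinger--Moser threshold equal to $\Lambda_m$, so that the spreading/concentration dichotomy and the barycenter count on $\M_k$ proceed exactly as in the regular case.
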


\smallskip

\begin{remark}\label{rem-ret}
\emph{We point out that a retraction $R:M\to \M$ as above exists for a wide class of manifolds. For example we can consider manifolds of the type $M^{n}\times M^{2m-n}$, where we denote by $M^l$ any $l$-dimensional closed manifold. Indeed, it is easy to see that we can define a retraction $R:M^{n}\times M^{2m-n}\to M^{n}\times\{p\}$ for some $p\in M^{2m-n}$ with the desired properties. Observe that the torus $\mathbb{T}^{2m}$ belongs to this class of manifolds. One could also consider the connected sum $(M^{n}\times M^{2m-n})\# N^{2m}$, modifying the above retraction so that it is constant on $N^{2m}$.}
\end{remark}

\medskip

We can also deduce the following multiplicity result. Here, $\M_k$ are the formal barycenters of $\M$ according to \eqref{bary} and $\widetilde H_q(\M_k)$ denotes its reduced $q$-th homology group.
\begin{theorem} \label{th-mult}
Under the assumptions of Theorem \ref{th1.2}, let $\kappa_{g_v}\in(k\Lambda_m,(k+1)\Lambda_m)$. Then, if  $\mathcal E$ in \eqref{functional} is a Morse functional,
$$
	\#\{ \mbox{solutions of }\eqref{1.mliouville}\} \geq \sum_{q\geq0} \mbox{\emph{dim} } \widetilde H_q(\M_k).
$$
\end{theorem}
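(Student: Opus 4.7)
The plan is to combine the Djadli-Malchiodi min-max topology already underlying Theorem~\ref{th1.2} with classical infinite-dimensional Morse theory. I would work with the Euler-Lagrange functional $\mathcal{E}$ of \eqref{functional} on the Hilbert space $X := H^m(M)$, whose critical points are exactly the solutions of \eqref{1.mliouville}. The assumption $\kappa_{g_v}\notin\Gamma$ guarantees, via the blow-up analysis developed earlier in the paper, that no blow-up can occur at the prescribed total mass, yielding a deformation property for $\mathcal{E}$ at every level of $X$. Combined with the Morse hypothesis, this is the standard setting in which the weak Morse inequalities can be applied.

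The key topological input is that, for $L>0$ sufficiently large, the sublevel set $\mathcal{E}^{-L}:=\{u\in X:\mathcal{E}(u)\leq -L\}$ is homotopy equivalent to $\M_k$. This is the very ingredient used to produce the min-max class in the proof of Theorem~\ref{th1.2}: since $\kappa_{g_v}\in(k\Lambda_m,(k+1)\Lambda_m)$, the improved singular Adams-Trudinger-Moser inequality (Theorem~\ref{ATM}) forces highly negative energy only on functions whose associated conformal volume concentrates near at most $k$ points of $\M$, the retraction $R:M\to\M$ being used to push any concentration on $M$ onto $\M$. A projection/test-function argument in the spirit of \cite{dm,ndiaye} then furnishes two explicit homotopy maps implementing $\mathcal{E}^{-L}\simeq \M_k$.

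Let $C_q$ denote the number of critical points of $\mathcal{E}$ of Morse index $q$. The weak Morse inequalities applied to the pair $(X,\mathcal{E}^{-L})$ give $C_q\geq \dim H_q(X,\mathcal{E}^{-L})$ for every $q\geq 0$. Since $X$ is a Hilbert space, hence contractible, the long exact sequence of the pair collapses to
\begin{equation*}
H_q(X,\mathcal{E}^{-L}) \cong \widetilde H_{q-1}(\mathcal{E}^{-L}) \cong \widetilde H_{q-1}(\M_k)
\end{equation*}
for every $q\geq 1$. Summing over $q$ and re-indexing then yields
\begin{equation*}
\#\{\text{solutions of }\eqref{1.mliouville}\} \;=\; \sum_{q\geq 0} C_q \;\geq\; \sum_{q\geq 1} \dim \widetilde H_{q-1}(\M_k) \;=\; \sum_{q\geq 0} \dim \widetilde H_{q}(\M_k),
\end{equation*}
which is precisely the claimed lower bound.

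The delicate part is the compactness input: since the exponential nonlinearity is critical for the Adams embedding, the standard Palais-Smale condition cannot be expected to hold unrestrictedly, and one must show that the only possible loss of compactness sits at levels prescribed by the blow-up analysis, which are exactly those associated to the set $\Gamma$. The exclusion $\kappa_{g_v}\notin\Gamma$ then cleans this up to yield a deformation lemma valid on all of $X$; once this and the Morse assumption are in place, the remaining argument is the purely topological homological computation above.
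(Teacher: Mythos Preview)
Your overall strategy matches the paper's: weak Morse inequalities applied to the pair formed by a contractible high-energy region and the low sublevel, the long exact sequence, and the topology of $\M_k$. The paper works with the pair $(\mathcal E^L,\mathcal E^{-L})$ (using that for $L\gg1$ there are no critical points above level $L$ and $\mathcal E^L$ is contractible) rather than $(X,\mathcal E^{-L})$, but this is a cosmetic difference.

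There is, however, a genuine overstatement in your argument. You assert that $\mathcal E^{-L}$ is homotopy equivalent to $\M_k$, but the projection/test-function construction does not give this. What is actually established (this is precisely \eqref{inj} in the paper) is only that the composition $\Psi_R\circ\Phi:\M_k\to\mathcal E^{-L}\to\M_k$ is homotopic to the identity on $\M_k$; the other composition $\Phi\circ\Psi_R$ is \emph{not} shown to be homotopic to $\mathrm{id}_{\mathcal E^{-L}}$, and there is no reason it should be, since the retraction $R:M\to\M$ collapses any concentration occurring off $\M$. One therefore only knows that $\M_k$ is a homotopy retract of $\mathcal E^{-L}$, hence $H_*(\M_k)\hookrightarrow H_*(\mathcal E^{-L})$ injectively. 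Fortunately this weaker input suffices: from
\[
C_q\;\geq\;\dim H_q(X,\mathcal E^{-L})\;=\;\dim\widetilde H_{q-1}(\mathcal E^{-L})\;\geq\;\dim\widetilde H_{q-1}(\M_k)
\]
your final summation goes through unchanged. So the proof stands once you replace the unproven homotopy equivalence by the homology injection, exactly as the paper does.
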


\begin{remark}
\emph{Consider for example the class of manifolds $M^{n}\times M^{2m-n}$ in Remark \ref{rem-ret}. We will get an explicit lower bound on the number of solutions as far as we can explicitly estimate the homology groups of $M^{n}_k$. One can find such computations in \cite{de} for general manifolds $M^n$, focusing on the cases $n=2$ and $n=4$. For some simple manifolds we can easily compute the homology groups. For example, if $M^{n}$ is a $2$-dimensional $G$-torus (connected sum of $G$ tori), then we have at least $\frac{(N+G-1)!}{N!(G-1)!}$ solutions, see \cite{bdm}.}
\end{remark}

\

The argument of the proof of the existence result is in the spirit of the celebrated min-max scheme of \cite{dm}, extended to high dimensions by \cite{ndiaye}, jointly with some ideas of \cite{bdm} to treat the singularities. Roughly speaking, the strategy is based on the study of the sublevels of the energy functional, in particular by showing the low sublevels are non-contractible. This is done by using improved versions of the singular Adams-Trudinger-Moser inequality. We will then overcome the complexity due to the singularities by retracting the manifold onto $\M$, not containing the singular points. This leads us to study the low sublevels just by looking at functions concentrating on such submanifold which is enough to gain some non-trivial homology. We refer the interested readers to \cite{bjwy} and \cite{bjmr} for a similar approach applied to surfaces with boundary and Toda systems, respectively.

\medskip

To conclude the min-max argument we would need some compactness property as the Palais-Smale conditions are not available in this setting. We thus use Struwe's monotonicity trick \cite{struwe}, which is by now a standard tool in this class of problems, to deduce the existence of a sequence of solutions $u_k$ satisfying \eqref{1.mliouville}. We will then conclude by showing the following compactness result which actually holds for any $2m$-dimensional manifold and $\alpha_j>-1$.   
\begin{theorem}
\label{th1.1}
Let $u_k$ be a sequence of solutions of \eqref{1.mliouville} with $\widetilde Q>0$ and $\alpha_j>-1$ for $j=1,\dots,N$. If 
$$
\mbox{Ker}\{P_g^{2m}\}=\{constants\}, \quad \kappa_{g_v}\notin\Gamma,
$$  
then there exists a constant $C$ independent of $k$ such that
\begin{equation*}
\|u_k\|_{L^\infty(M)}\leq C.
\end{equation*}
\end{theorem}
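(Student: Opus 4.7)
The plan is a contradiction argument based on concentration--compactness and blow-up analysis. Suppose there were a sequence $u_k$ of solutions of \eqref{1.mliouville} with $\|u_k\|_{L^\infty(M)}\to\infty$. Integrating \eqref{1.mliouville} against $1$ and using $\mbox{Ker}\{P_g^{2m}\}=\{constants\}$ (so that $P_g^{2m}$ is self-adjoint with $P_g^{2m}1=0$ and hence $\int_M P_g^{2m}u_k\,dvol_g=0$), one gets
$$
\int_M\widetilde Q\,e^{2mu_k}\,dvol_g=\kappa_{g_v}.
$$
Thus the measures $\mu_k:=\widetilde Q\,e^{2mu_k}\,dvol_g$ have uniformly bounded mass and, along a subsequence, $\mu_k\rightharpoonup\mu_\infty$ with $\mu_\infty(M)=\kappa_{g_v}$. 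Writing $u_k=v_k+\bar u_k$ with $\bar u_k=\frac{1}{|M|}\int_M u_k\,dvol_g$, the Green's representation (Lemma \ref{expGreen}) gives $v_k(x)=\int_M G(x,y)(\widetilde Q\,e^{2mu_k(y)}-c_k)\,dvol_g(y)$ for a suitable constant $c_k$, which will be the workhorse for the a priori estimates.

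The first substantial step is a higher-order singular Brezis--Merle alternative. For a small threshold $\varepsilon_0$ (below the Adams--Trudinger--Moser constant of Theorem \ref{ATM}), set $S:=\{p\in M:\mu_\infty(\{p\})\geq\varepsilon_0\}$; a standard argument shows $S$ is finite. Elliptic estimates together with the Green's representation show that $v_k$ is locally bounded in $C^{2m-1}(M\setminus S)$. Combining this with a polyharmonic Harnack-type inequality (Adimurthi--Robert--Struwe, Martinazzi) and the positivity of $\widetilde Q$ rules out the possibility that $\bar u_k$ stays bounded whenever $S\neq\emptyset$: if so, $u_k$ would converge smoothly on $M\setminus S$ to a regular limit, but the atoms of $\mu_\infty$ would produce a distributional $\delta$-source incompatible with such a smooth solution. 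Hence in the blow-up regime $\bar u_k\to-\infty$, $u_k\to-\infty$ locally uniformly on $M\setminus S$, and $\mu_\infty$ is purely atomic on $S$.

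Next comes the blow-up analysis at each $p\in S$. Pick $x_k\to p$ with $u_k(x_k)=\max_{B_\delta(p)}u_k\to\infty$ and rescale: at a regular point $p\notin\{q_1,\dots,q_N\}$ set $r_k=e^{-u_k(x_k)}$, while at a singular point $p=q_j$ set $r_k=e^{-u_k(x_k)/(1+\alpha_j)}$, so as to absorb the $|y|^{2m\alpha_j}$ weight coming from \eqref{qtilde} and the logarithmic asymptotics of $G(\cdot,q_j)$. Defining $\tilde u_k(y):=u_k(\exp_{x_k}(r_k y))+\log r_k$, the rescaled equation converges on compacta of $\mathbb{R}^{2m}$ to
$$
(-\Delta)^m U=(2m-1)!\,e^{2mU}\qquad\text{or}\qquad (-\Delta)^m U=(2m-1)!\,|y|^{2m\alpha_j}\,e^{2mU}.
$$
The Green's representation controls the growth of $\tilde u_k$ at infinity and forces the blow-up limit $U$ to be a \emph{normal} (i.e., spherical) solution; by the classification of Lin, Wei--Xu, Chang--Chen and Martinazzi in the regular case and its singular counterpart, $U$ has total mass exactly $\Lambda_m$ (respectively $\Lambda_m(1+\alpha_j)$). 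A standard neck analysis, decomposing $M$ into the bubble regions and the complement, shows that no mass escapes into the intermediate annuli, so $\mu_\infty(\{p\})$ equals the bubble mass.

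Summing over $S$ then yields
$$
\kappa_{g_v}=\mu_\infty(M)=n\Lambda_m+\Lambda_m\sum_{j\in J}(1+\alpha_j)\in\Gamma,
$$
where $n$ counts the regular blow-up points and $J\subset\{1,\dots,N\}$ indexes the singular ones, contradicting $\kappa_{g_v}\notin\Gamma$. The main difficulty I anticipate is the classification step in dimension $2m>2$: entire solutions of the polyharmonic Liouville equation form a much larger family than in dimension two, including non-spherical solutions with a continuum of total masses, so one must carefully leverage the maximum-point rescaling and the Green's representation to guarantee the blow-up limits are normal and hence spherical. A secondary delicate point is to rule out the coexistence of additional regular bubbles at a singular blow-up $q_j$, which will require separating blow-up scales and iterating the concentration analysis.
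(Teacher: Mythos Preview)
Your overall architecture (contradiction via concentration--compactness, finitely many atoms, purely atomic limit measure with quantized weights summing to $\kappa_{g_v}$) matches the paper, but the mechanism for the quantization at singular points is different. The paper does not rescale and classify entire limits there. Instead it defines the local mass $\sigma_p=\lim_{r\to 0}\lim_{k\to\infty}\int_{B^M_r(p)}\widetilde Q\,e^{2mu_k}$ and computes it by a Pohozaev identity on the manifold, in normal coordinates: multiplying the equation by $x\cdot\nabla u_k$ on $B_r(0)$, the right-hand side gives $-(1+\alpha_p)\sigma_p+o_r(1)$, the leading polyharmonic term is handled by an explicit integration-by-parts formula (Lemma~\ref{lea.poho}) and contributes $-\sigma_p^2/\Lambda_m$, and all lower-order terms are shown to vanish as $r\to 0$ using the $L^l$ gradient bounds from the Green's representation (Lemma~\ref{le2.1}). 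This yields $\sigma_p=(1+\alpha_p)\Lambda_m$ directly, with no bubble extraction and no neck analysis. At regular blow-up points the paper simply quotes Martinazzi's concentration theorem.

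This difference matters because your route has two genuine gaps that the Pohozaev argument bypasses. First, at a singular point the rescaled limit would satisfy $(-\Delta)^m U=c\,|y|^{2m\alpha_j}e^{2mU}$ on $\mathbb{R}^{2m}$, and for $m>1$ there is no classification of such solutions in the references you invoke; even if you can force the limit to be ``normal'' via the Green's representation, that by itself does not pin down its total mass without an additional (singular) Pohozaev computation, and the possible coexistence of off-center regular bubbles near $q_j$ that you flag as a ``secondary delicate point'' is precisely what makes a bubble-counting proof hard here. Second, your exclusion of a finite limit $u_0$ when $S\neq\emptyset$ is too fast: a $\delta$-source at $p$ is \emph{not} incompatible with $u_0\in C^\infty(M\setminus S)$ --- it merely makes $u_0$ logarithmically singular at $p$. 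The paper first \emph{assumes} such a $u_0$ exists, runs the Pohozaev identity to compute $\sigma_p$, and only then observes that the resulting rate $\widetilde Q\,e^{2mu_0}\sim d_g(\cdot,p)^{2m\alpha_p-4m\sigma_p/\Lambda_m}$ is non-integrable (for $\alpha_p<0$ a simpler integrability bound already contradicts the minimal-mass Lemma~\ref{le2.2}). Thus the Pohozaev computation does double duty --- it both rules out a finite limit and delivers the quantization --- and replaces the entire-solution classification and neck analysis you propose.
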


The latter result is a consequence of a quantization phenomenon of blowing-up solutions which is derived via Pohozaev-type inequalities in the spirit of \cite{AhmedouWuZhang,bt,LinWei}.

\medskip

The above analysis, together with Morse inequalities, allows us to deduce also the multiplicity result of Theorem \ref{th-mult}.

\begin{remark}
\emph{We conclude the introduction with the following observations.} 

\medskip

\noindent 1. \emph{The existence result is derived for the case $\alpha_j>0$ for all $j$. In principle, the same strategy can be carried out for the case $\alpha_j\in(-1,0)$. However, in this scenario we get a worse Adams-Trudinger-Moser inequality in Theorem \ref{ATM} and this in turn affects the topology of the low sublevels in a non-trivial way, see for instance \cite{cm}. We postpone this study to a future paper.}

\medskip

\noindent 2. \emph{The same analysis should work in the odd-dimensional case with some further technical difficulties, as explained in Section 5 of \cite{ndiaye}. We will not discuss this case in the present paper.}
\end{remark}

\medskip

This paper is organized as follows: in Section 2 we collect some useful preliminary results, Section 3 is devoted to blow-up analyis and the proof of Theorem~\ref{th1.1} and in Section 4 we carry out the min-max method to derive the existence and multiplicity results of Theorems \ref{th1.2} and \ref{th-mult}. A Pohozaev-type identity is provided in the last section.

\medskip

\begin{center}
\textbf{Notations:}
\end{center}
\begin{enumerate}
\item [$B_r^{M}(p)$] \quad the ball centered at $p\in M$ with geodesic radius $r$ on the manifold $M$.
\smallskip
\item [$B_r(p)$] \quad the ball centered at $p$ with radius $r$ in $\mathbb{R}^{2m}$.
\end{enumerate}

\

\section{Preliminary facts}

In this section we recall briefly some known results which can be easily derived from the existing literature.  Let $p$ be a point in $M$ and $B_r^M(p)$ be the geodesic normal ball such that $B_r^M(p)$ is mapped by $\exp_p^{-1}$ diffeomorphically onto a neighborhood of $0\in T_p(M),$ where $T_p(M)$ refers to the tangent space of $p$ which can be identified with $\Rm$. The local coordinates defined by the chart $(\exp_p^{-1},B_r^{M}(p))$ are called normal coordinates with center $p.$ In such coordinates, the Riemannian metric at the point $p$ satisfies
\begin{equation}
\label{a.m1}
g_{ij}=\delta_{ij},~g_{ij,k}=0,~\Gamma_{jk}^i=0,\quad \mbox{for all}~i,j,k\in\{1,\cdots,2m\},
\end{equation}
where $\Gamma_{jk}^i$ stands for the Christoffel symbols. With the above preparation, we have
\begin{lemma}
\label{lea.m}
Let $-\Delta_g$ be the Laplace-Beltrami operator and $p$ be any point of $M$. In normal coordinates at $p$, we have
\begin{equation}
\label{2.expand}
(-\Delta_g)^mu=(-\Delta)^mu+\mathcal{D}^{2m}u+\mathcal{D}^{2m-1}u,	
\end{equation}
where $\mathcal{D}^{2m}$ is a linear differential operator of order $2m$ whose the coefficients are $O(|x-p|^2)$ as $x$ tends to $p$, while $\mathcal{D}^{2m-1}$ is a linear differential operator of order at most $2m-1$, and whose coefficients  belong to $C_{\mathrm{loc}}^l(\Rm)$ for all $l\geq 0$.
\end{lemma}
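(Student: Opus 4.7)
\smallskip

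\textbf{Proof plan.} The approach is to start from the local coordinate expression
\[
-\Delta_g u = -g^{ij}\partial_i\partial_j u - \frac{1}{\sqrt{|g|}}\,\partial_i\!\bigl(\sqrt{|g|}\,g^{ij}\bigr)\partial_j u
\]
and then to iterate $m$ times, tracking carefully the order of vanishing of the coefficients at $p$. Using \eqref{a.m1} together with the standard Taylor expansion of $g_{ij}$ in normal coordinates (namely $g_{ij}(x)=\delta_{ij}-\tfrac13 R_{ikjl}(p)(x-p)^k(x-p)^l+O(|x-p|^3)$), one gets
\[
g^{ij}(x)=\delta^{ij}+A^{ij}(x),\qquad \sqrt{|g|}(x)=1+\rho(x),
\]
with $A^{ij}(x)=O(|x-p|^2)$ and $\rho(x)=O(|x-p|^2)$, both smooth in a neighborhood of $p$. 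Substituting, I would obtain the case $m=1$ in the form
\[
-\Delta_g u = -\Delta u - A^{ij}(x)\,\partial_i\partial_j u + b^i(x)\,\partial_i u,
\]
where $b^i$ is smooth and $O(|x-p|)$; this is precisely the decomposition claimed, with $\mathcal{D}^{2}u=-A^{ij}\partial_i\partial_j u$ and $\mathcal{D}^{1}u=b^i\partial_i u$.

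The plan is then to argue by induction on $m$. Assume
\[
(-\Delta_g)^{m-1}u=(-\Delta)^{m-1}u+\mathcal{D}^{2m-2}u+\mathcal{D}^{2m-3}u,
\]
with $\mathcal{D}^{2m-2}$ of order $2m-2$ having coefficients $O(|x-p|^2)$ and $\mathcal{D}^{2m-3}$ of order at most $2m-3$ with smooth coefficients. Applying $-\Delta_g=-\Delta-A^{ij}\partial_i\partial_j+b^i\partial_i$ to each piece and using Leibniz's rule, I would sort the resulting terms by how many derivatives land on the variable coefficients $A^{ij}$, $b^i$, or on the coefficients of $\mathcal{D}^{2m-2}$ and $\mathcal{D}^{2m-3}$. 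The principal part contribution $(-\Delta)\bigl[(-\Delta)^{m-1}u\bigr]$ reproduces $(-\Delta)^m u$ exactly. Every other top-order ($2m$) contribution is either $-A^{ij}\partial_i\partial_j\bigl[(-\Delta)^{m-1}u\bigr]$ or comes from the principal symbol of $\mathcal{D}^{2m-2}$ being differentiated twice by $-\Delta$ without hitting the coefficients; in both cases the resulting coefficient is still $O(|x-p|^2)$. All other terms have at least one derivative falling on a coefficient, which strictly lowers the order to at most $2m-1$ and produces smooth (though not necessarily vanishing) coefficients, so they can be absorbed into $\mathcal{D}^{2m-1}$.

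The bookkeeping is routine but requires care: one must check that each new contribution of order $2m$ genuinely retains the $O(|x-p|^2)$ vanishing, while everything of lower order only needs to be smooth. This is the main (and only) technical point, but it follows from the simple observation that differentiation lowers the vanishing order of $A^{ij}$ or $\rho$ by at most one per derivative, so a coefficient of the top-order term that has not been differentiated at all remains $O(|x-p|^2)$. Regularity of the lower-order coefficients on all of $\mathbb{R}^{2m}$ (in normal coordinates extended smoothly) follows from the smoothness of $g$, $g^{-1}$ and $\sqrt{|g|}$ in the normal coordinate chart. Putting these steps together yields the claimed decomposition \eqref{2.expand} and completes the induction.
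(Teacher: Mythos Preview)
Your proof is correct and rests on the same key observation as the paper's: in normal coordinates $g^{ij}=\delta^{ij}+O(|x-p|^2)$, so the principal part of $(-\Delta_g)^m$ differs from $(-\Delta)^m$ by an operator with $O(|x-p|^2)$ coefficients, while everything else is of order at most $2m-1$ with smooth coefficients. The only difference is organizational: the paper expands the $m$-fold composition directly and reads off the leading term as $\prod_{a=1}^m g^{i_aj_a}\partial_{i_1j_1\cdots i_mj_m}=\prod_{a=1}^m\delta_{i_aj_a}\partial_{i_1j_1\cdots i_mj_m}+O(|x-p|^2)$, whereas you set up an induction on $m$; both arrive at the same decomposition.
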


\begin{proof}
By the definition of Laplace-Beltrami operator, we have 
\begin{equation}
\label{a.m2}
-\Delta_gu=-\frac{1}{\sqrt{\mbox{det}g}}\partial_i(\sqrt{\mbox{det}g}g^{ij}\partial_ju),
\end{equation}
where $g^{ij}$ is the inverse of $g_{ij}.$ Using \eqref{a.m1} we can write
\begin{equation}
\label{a.m3}
-\Delta_gu=-g^{ij}\partial_{ij}u-\frac{\partial_i(\sqrt{\mbox{det}g}g^{ij})}{\sqrt{\mbox{det}g}}\partial_ju.
=-g^{ij}\partial_{ij}u-\vartheta_j\partial_ju,
\end{equation}
where $\vartheta_j$ is a smooth function. Based on \eqref{a.m3}, it is easy to see that in the final expression of $(-\Delta)_g^m$ the leading differential order  is 
$$g^{i_1j_1}g^{i_2j_2}\cdots g^{i_mj_m}\partial_{i_1j_1i_2j_2\cdots i_mj_m}\cdot,$$
where $i_a,j_b\in\{1,\cdots,2m\},~\forall a,b\in\{1,\cdots,m\}.$
While the remaining terms are order at most $2m-1$ and the coefficients are smooth due to the exponential map is differentiable with arbitrary order. Consider the leading term, using \eqref{a.m1}, we see that 
$$g^{ij}(x)=\delta_{ij}(x)+O(r^2),\quad \mathrm{if}~x\in B_r^M(p).$$
Therefore we can write 
$$\prod_{a=1}^mg^{i_aj_a}(x)=\prod_{a=1}^m\delta_{i_aj_a}+O(r^2),\quad \mathrm{if}~x\in B_r^M(p).$$
As a consequence, we can write
\begin{equation}
g^{i_1j_1}g^{i_2j_2}\cdots g^{i_mj_m}\partial_{i_1j_1i_2j_2\cdots i_mj_m}\cdot
=(-\Delta)^m\cdot+\mathcal{D}^{2m}\cdot,
\end{equation}
with $\mathcal{D}^{2m}$ satisfies the property stated in the lemma. Then we finish the proof.
\end{proof}

\medskip

\begin{remark} \label{rem-local}
\emph{Throughout the paper, when performing local computations, we may consider conformal normal coordinates, see \cite{cao} or \cite{LeeParker}, if needed. These are normal coordinates at a point $x_0$ for a metric $g_w=e^{2w}g$ with \emph{det}$(g_w)=1$ in a small neighborhood of $x_0$ and other useful properties, for which we refer the interested reader to \cite{WeinZhang}. Observe that the differential operator $P_g^{2m}$, after this change of the metric, can be still expanded as the right hand side of \eqref{2.expand}. Indeed, $w(x)=O(d_{g_w}^2(x,x_0))$ and it is smooth in a small neighborhood of $x_0$. Moreover, by \eqref{1.higher} we can write \eqref{1.mliouville} as 
$$
P_{g_w}^{2m}u=e^{-2mw}P_gu=e^{-2mw}\left(-Q_g^{2m}-\frac{\Lambda_m}{2|M|}\sum_{j=1}^N\alpha_j+\widetilde{Q}e^{2mu}\right),
$$
which is equivalent to
$$
e^{2mw}P_{g_w}^{2m}u=-Q_g^{2m}-\frac{\Lambda_m}{2|M|}\sum_{j=1}^N\alpha_j+\widetilde Qe^{2mu}.
$$
Concerning the differential operator $e^{2mw}P_{g_w}^{2m}$, it is known that the leading order operator is $e^{2mw}(-\Delta_{g_w})^m$. Then, by using the above Lemma \ref{lea.m} and the asymptotic behavior of $w(x)$, we can write
$$
e^{2mw}(-\Delta_{g_w})^{m}u=(-\Delta)^mu+\mathcal{P}^{2m}u+\mathcal{P}^{2m-1}u,
$$
with $\mathcal{P}^{2m}$ and $\mathcal{P}^{2m-1}$ satisfying the same properties as $\mathcal{D}^{2m}$ and $\mathcal{D}^{2m-1}$ in Lemma \ref{lea.m}.}
\end{remark}

\

In what follows, recall $\Lambda_m=(2m-1)!|\mathbb{S}^{2m}|$. We will need the following structural result on the Green's functional of the operators under consideration, see Lemma 2.1 in \cite{ndiaye}.
\begin{lemma}\label{expGreen}
Suppose $\mbox{Ker}\{P_g^{2m}\}=\{constants\}$. Then the Green's function $G(x,y)$ of $P_g^{2m}$ exists and has the following properties:
\begin{enumerate}
\item For all $u \in C^{2m}(M)$ we have for $x \neq y \in M$
$$
u(x)-\bar u=\int_M G(x,y)P^{2m}_g u(y)dV_g(y), \quad \int_M G(x,y) dV_g(y)=0,
$$
$$P_g^{2m}G(x,p)=\delta_p-\dfrac{1}{|M|},$$
where $\bar u$ is the average of $u$.
\item The function 
$$G(x,y)=H(x,y)+K(x,y)
$$
is smooth on $M\times M$, away from the diagonal. The function $K$ extends to a $C^{2,\alpha}$ function on $M \times M$ and $H$ satisfies
$$
H(x,y)=\frac{2}{\Lambda_m} \log\Big ( \frac{1}{r} \Big )\,f(r),
$$
\end{enumerate}
where $r$ is the geodesic distance from $x$ to $y$ and $f$ is a smooth positive, decreasing function such that $f(r)=1$ in a neighborhood of $r=0$ and $f(r)=0$ for $r \geq \text{inj}_g(M)$. 
\end{lemma}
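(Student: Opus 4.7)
Since $P_g^{2m}$ is formally self-adjoint and elliptic of order $2m$, the hypothesis $\mbox{Ker}\{P_g^{2m}\} = \{\text{constants}\}$ together with the Fredholm alternative gives that $P_g^{2m}$ is an isomorphism from $H^{2m}(M)\cap\{\bar u=0\}$ onto $L^2(M)\cap\{\bar v=0\}$. Let $T$ denote its inverse. I would \emph{define} the Green function $G(x,y)$ as the Schwartz kernel of $T$, normalized by $\int_M G(x,y)\,dV_g(y)=0$. The representation formula in (i) is then obtained by applying $T$ to $P_g^{2m} u$ (which has zero average), and the distributional identity $P_g^{2m} G(\cdot,p) = \delta_p - 1/|M|$ follows because $T\circ P_g^{2m}$ equals the projection onto zero-mean functions.

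To extract the singular part, I would work in normal coordinates centered at $p$ and invoke Lemma \ref{lea.m}, which decomposes $P_g^{2m} = (-\Delta)^m + \mathcal{D}^{2m} + \mathcal{D}^{2m-1}$, with $\mathcal{D}^{2m}$ of order $2m$ and $O(r^2)$ coefficients and $\mathcal{D}^{2m-1}$ of order at most $2m-1$ with smooth coefficients. A standard computation (reducing to the divergence theorem on spheres iterated $m$ times) shows that $\Phi(x) = \tfrac{2}{\Lambda_m}\log(1/|x|)$ is a fundamental solution of $(-\Delta)^m$ on $\Rm$. Transporting $\Phi$ to $M$ via $\exp_p^{-1}$ and multiplying by the smooth cutoff $f(r)$ in the statement produces the globally defined candidate $H(x,y)$, so that $H \sim \tfrac{2}{\Lambda_m}\log(1/r)$ near the diagonal.

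The remaining, and most delicate, step is to show that $K := G - H$ is globally $C^{2,\alpha}$. By construction, $K(\cdot, p)$ satisfies $P_g^{2m} K(\cdot, p) = -1/|M| - \bigl(P_g^{2m} H(\cdot, p) - \delta_p\bigr)$. Applying the decomposition above to $H$, the $\delta_p$ singularities cancel and the remainder is controlled by $\mathcal{D}^{2m}\Phi$ and $\mathcal{D}^{2m-1}\Phi$, plus contributions from derivatives of the cutoff (which are smooth and compactly supported away from the diagonal). The crucial feature is that the $O(r^2)$ coefficients of $\mathcal{D}^{2m}$ absorb two of the $2m$ derivatives landing on $\log(1/r)$, placing $\mathcal{D}^{2m}\Phi$ in $L^p_{\mathrm{loc}}$ for a suitable $p>1$; the lower-order contribution $\mathcal{D}^{2m-1}\Phi$ has a similar status. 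One then iterates the parametrix, subtracting successively smoother correctors from $H$ until the remainder is as regular as desired, and Schauder estimates for the elliptic operator $P_g^{2m}$ give $K\in C^{2,\alpha}(M\times M)$; smoothness of $G$ off the diagonal is a direct consequence of interior elliptic regularity applied to $P_g^{2m}G(\cdot,p) = -1/|M|$ there. The main obstacle is precisely this regularity step for $K$, where the $O(r^2)$ vanishing of the coefficients of $\mathcal{D}^{2m}$ provided by Lemma \ref{lea.m} is essential for the parametrix iteration to close.
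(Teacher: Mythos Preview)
The paper does not supply its own proof of this lemma; it simply refers the reader to Lemma~2.1 in \cite{ndiaye}. Your sketch is a correct outline of the standard parametrix (Hadamard) construction that underlies that reference---existence of $G$ via self-adjointness and the Fredholm alternative, the logarithmic fundamental solution of $(-\Delta)^m$ in $\mathbb{R}^{2m}$ transported by $\exp_p^{-1}$ and cut off to produce $H$, and iteration together with Schauder theory to push the remainder $K$ into $C^{2,\alpha}$---so there is nothing substantive to compare.
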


As mentioned in the introduction, the total $Q$-curvature is a conformal invariant for which the following formula holds true.

\begin{theorem}\label{CGB}
Consider $D=\sum_{i=1}^N p_i \alpha_i$ where $p_i \in M$ and $\alpha_i >-1$. Let $g$ be a smooth metric on $M$ and $g_v=e^{2v}g$ be the conical metric representing $D$ as explained before \eqref{kappa}. Then, it holds
\begin{equation}
\label{CGBformula}
\int_MQ_{g_v}^{2m}\,dvol_{g_v}=\int_MQ_{g}^{2m}\,dvol_{g}+\frac{\Lambda_m}{2}\sum_{i=1}^N \alpha_i.
\end{equation}
\end{theorem}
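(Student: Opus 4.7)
The plan is to derive~\eqref{CGBformula} by pairing the singular Liouville-type equation~\eqref{1.s-mliouville} with the constant test function $\phi\equiv 1$, exploiting the self-adjointness of $P_g^{2m}$ together with the fact that constants lie in its kernel.

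The conical condition $g_v = f(x)|x|^{2\alpha_j}|dx|^2$ locally near each $q_j$ forces the logarithmic asymptotic $v\sim\alpha_j\log|x|$, so $v\in L^1_{\mathrm{loc}}(M)$ and $P_g^{2m}v$ is a well-defined distribution on $M$. Formal self-adjointness of $P_g^{2m}$, together with $P_g^{2m}(1)=0$, gives by the very definition of the distributional pairing
$$
\int_M P_g^{2m}v\,dvol_g = \langle v,\,P_g^{2m}(1)\rangle = 0.
$$
Integrating both sides of~\eqref{1.s-mliouville} against $\phi\equiv 1$ therefore produces
$$
\int_M Q_g^{2m}\,dvol_g = \int_M Q_{g_v}^{2m}\,e^{2mv}\,dvol_g - \frac{\Lambda_m}{2}\sum_{j=1}^N\alpha_j,
$$
and the change-of-volume identity $dvol_{g_v}=e^{2mv}\,dvol_g$ converts the first term on the right into $\int_M Q_{g_v}^{2m}\,dvol_{g_v}$. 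This integral is finite because $\alpha_j>-1$ keeps $e^{2mv}\sim|x|^{2m\alpha_j}$ locally integrable near each conical point. Rearranging yields exactly~\eqref{CGBformula}.

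The main obstacle, if one had to derive~\eqref{1.s-mliouville} from scratch rather than take it as given, is computing the Dirac masses carried by $P_g^{2m}v$ as a distribution. This can be done by writing $v = u - \frac{\Lambda_m}{2}\sum_j\alpha_j G(\cdot,q_j)$ with $u\in C^{2m}(M)$, where the Green-function singularity $\frac{2}{\Lambda_m}\log(1/r)$ from Lemma~\ref{expGreen} is chosen precisely to cancel the logarithmic part of $v$, and then using the identity $P_g^{2m}G(\cdot,q_j)=\delta_{q_j}-\frac{1}{|M|}$ distributionally. At the Euclidean level this reduces to $(-\Delta)^m\log(1/|x|)=\frac{\Lambda_m}{2}\delta_0$ on $\mathbb{R}^{2m}$, and Lemma~\ref{lea.m} together with Remark~\ref{rem-local} guarantees that the lower-order terms $\mathcal{D}^{2m}$, $\mathcal{D}^{2m-1}$ applied to $\log r$ contribute only locally integrable remainders, hence no additional Dirac contribution.
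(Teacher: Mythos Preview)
Your proposal is correct and follows essentially the same route as the paper: the paper's proof is a one-line appeal to Lemmata~\ref{lea.m} and~\ref{expGreen} (citing \cite{FangMa} for $m=2$), and your second paragraph is precisely the elaboration of that argument---the Green-function decomposition supplies the Dirac masses in~\eqref{1.s-mliouville} via $P_g^{2m}G(\cdot,q_j)=\delta_{q_j}-\frac{1}{|M|}$, Lemma~\ref{lea.m} controls the lower-order remainder, and then integrating against the constant $1$ yields~\eqref{CGBformula}. One minor remark: your sign in $v = u - \frac{\Lambda_m}{2}\sum_j\alpha_j G(\cdot,q_j)$ differs from the paper's convention $u = v - \frac{\Lambda_m}{2}\sum_j\alpha_j G(\cdot,q_j)$, though this is entangled with the typo $g_v=e^{-2mv}g$ in the theorem statement itself (versus $g_v=e^{2v}g$ used elsewhere), so the discrepancy is cosmetic rather than a gap.
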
 
\begin{proof}
The proof is a standard argument (see e.g. \cite{FangMa} for $m=2$), using Lemmata \ref{lea.m} and \ref{expGreen}. See also \cite{BN} for a more general result which implies this statement as a particular case. 
\end{proof}

Finally, we state the general singular Adams-Trudinger-Moser inequality suitable to treat our problem. We focus here for simplicity on the case $P_g^{2m}\geq0$ and refer to the discussion in \cite{ndiaye} for the general case.
\begin{theorem}\label{ATM}
Consider $D=\sum_{i=1}^N p_i \alpha_i$ where $p_i \in M$ and $\alpha_i >-1$. Let $\widetilde Q>0$ be as in \eqref{qtilde}. Assume $P_g^{2m}\geq0$  and   $\mbox{Ker}\{P_g^{2m}\}=\{constants\}$. Then, there exists a constant $C=C(\alpha,M)$ such that for any $u \in H^{m}(M)$ we have
$$
\Lambda_m\Bigr(1+\min_j\left\{\a_j,0\right\}\Bigr)\log \int_M \widetilde Qe^{2m(u-\bar u)}\,dvol_{g} \leq m \int_M uP^{2m}_g u\,dvol_{g} + C,
$$
where $\bar u$ is the average of $u$.
\end{theorem}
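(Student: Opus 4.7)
My plan is to reduce the statement to two ingredients: the non-singular Adams--Moser--Trudinger inequality for $P_g^{2m}$ (due to Beckner and Fontana, as invoked in Ndiaye \cite{ndiaye}) and a sharp local weighted Adams-type inequality in Euclidean space. Under the hypotheses $P_g^{2m}\geq 0$ and $\mathrm{Ker}\{P_g^{2m}\}=\{\mathrm{constants}\}$, the former reads
\begin{equation*}
\Lambda_m \log \int_M e^{2m(u-\bar u)}\,dvol_g \;\leq\; m \int_M u\,P_g^{2m} u\, dvol_g + C,
\end{equation*}
and the remaining task is to accommodate the weight $\widetilde Q$, relaxing the prefactor to $\Lambda_m(1+\min_j\{\alpha_j,0\})$ when some $\alpha_j$ is negative.

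First I would analyze $\widetilde Q$ near each conical point. By Lemma \ref{expGreen}, near $q_j$ one has $G(x,q_j)=-\tfrac{2}{\Lambda_m}\log d_g(x,q_j)+O(1)$, so from the definition \eqref{qtilde}
\begin{equation*}
\widetilde Q(x) \;\leq\; C\, d_g(x,q_j)^{2m\alpha_j} \qquad \text{near } q_j,
\end{equation*}
while $\widetilde Q$ is uniformly bounded on any set separated from the $q_j$'s. If $\min_j\alpha_j\geq 0$, then $\widetilde Q\in L^\infty(M)$ and the claim follows at once from the non-singular inequality with prefactor $\Lambda_m$, matching $1+\min_j\{\alpha_j,0\}=1$. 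Otherwise set $\alpha=\min_j\alpha_j\in(-1,0)$ and fix a smooth partition of unity $\{\chi_0,\chi_1,\dots,\chi_N\}$ subordinate to a cover of $M$ by small conformal normal coordinate balls $B_r^M(q_j)$ (Remark \ref{rem-local}) together with the complement of $\bigcup_j B_{r/2}^M(q_j)$.

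On $\mathrm{supp}\,\chi_0$ the weight $\widetilde Q$ is bounded, and the non-singular AMT applied to $\int \chi_0\widetilde Q\, e^{2m(u-\bar u)}\,dvol_g$ yields a bound with the even better prefactor $\Lambda_m$. On each $B_r^M(q_j)$, I would pull back to Euclidean coordinates and, via Lemma \ref{lea.m} together with Remark \ref{rem-local}, replace $P_g^{2m}$ by $(-\Delta)^m$ up to lower-order differential operators with smooth bounded coefficients. The core local ingredient is the polyharmonic singular Adams inequality of Lam--Lu type: for $\alpha_j>-1$ and $\varphi\in W_0^{m,2}(B_r)$,
\begin{equation*}
\Lambda_m(1+\alpha_j)\log\int_{B_r}|x|^{2m\alpha_j}\,e^{2m(\varphi-\bar \varphi)}\,dx \;\leq\; m\int_{B_r}|(-\Delta)^{m/2} \varphi|^2\,dx + C,
\end{equation*}
which is the $m$-polyharmonic analogue of the singular Moser--Trudinger inequality of Adimurthi--Sandeep. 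Combining the $N+1$ local estimates via $\log\sum_i a_i\leq \log(N+1)+\max_i\log a_i$ and retaining the weakest prefactor $\Lambda_m(1+\alpha)$ produces the claimed global bound.

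The main obstacle is controlling the errors introduced by localization: multiplying $u$ by $\chi_j$ produces lower Sobolev-order pieces whose $H^m$-norm must be absorbed into $\int u P_g^{2m} u\,dvol_g+C$, and the terms $\mathcal D^{2m}, \mathcal D^{2m-1}$ from Lemma \ref{lea.m} must be handled without degrading the sharp constant in the dominant quadratic form. These are by now standard issues in concentration-type inequalities (cf.\ Fang-Ma \cite{FangMa} for $m=2$ and Ndiaye \cite{ndiaye} for the polyharmonic case without singularities) and rely on elliptic regularity together with the kernel hypothesis $\mathrm{Ker}\{P_g^{2m}\}=\{\mathrm{constants}\}$, which provides a Poincar\'e-type inequality letting one estimate all intermediate seminorms of $u-\bar u$ by the right-hand side.
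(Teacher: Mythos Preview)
Your proposal is correct and follows essentially the same route as the paper, which simply defers to Proposition~2.2 of \cite{ndiaye} for the non-singular inequality and to the approach of \cite{FangMa} for the conic modification; what you have written is precisely an outline of the Fang--Ma localization strategy (partition of unity, local singular Adams inequality near each $q_j$, absorption of cut-off and lower-order errors via the Poincar\'e inequality coming from $\mathrm{Ker}\{P_g^{2m}\}=\{\text{constants}\}$), so the approaches coincide.
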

\begin{proof}
The case without singularities is Proposition 2.2 in \cite{ndiaye}. The conic case follows by the same approach as in \cite{FangMa}.
\end{proof}

\

\section{Compactness property}
In this section we shall prove the compactness result of Theorem \ref{th1.1}. For simplicity of notation, there is no loss of generality to consider a blow-up sequence $u_k$ to
\begin{equation}
\label{2.u}
P_g^{2m}u_k+Q_g^{2m}= \widetilde Q e^{2mu_k},
\end{equation}
where
$$\widetilde Q=  Q_{g_{v}}^{2m}e^{-m\Lambda_m\sum_{j=1}^N\alpha_jG(x,q_j)}>0, \quad \Lambda_m=(2m-1)!|\mathbb{S}^{2m}|.$$
We call $p$ the blow-up point for the blow-up sequence $\{u_k\}$ if $u_k(p)\to+\infty$ as $k\to+\infty$. Collecting all the blow-up points into a set $\mathcal{B}$ and we name it the blow-up set for $\{u_k\}$. Theorem \ref{th1.1} will follow by showing a concentration phenomenon: 
$$\widetilde Q e^{2mu_k}\rightharpoonup\sum_{p\in\mathcal{B}}
(1+\alpha_p)\Lambda_m\delta_p\quad \mathrm{as}\quad k\to+\infty,$$
weakly in the sense of measures, 
\begin{equation*}
\alpha_p=\begin{cases}
0,\quad &\mbox{if}\quad p\notin\{q_1,\cdots,q_N\},\\
\alpha_j,\quad &\mbox{if}\quad p=q_j.
\end{cases}
\end{equation*}
It follows that when blow-up occurs, then necessarily 
$$\int_{M}\widetilde Qe^{2mu_k}\,dvol_g\to \sigma\in\Gamma\quad\mbox{as}\quad k\to +\infty,$$
where $\Gamma$ is given in \eqref{gamma}. 

\

First, we establish the following lemma.
\begin{lemma}
\label{le2.1}	
Let $\{u_k\}$ be a sequence of functions on $(M,g)$ satisfying \eqref{2.u}. Then for $i=1,\cdots,2m-1$ we have
\begin{equation}
\label{2.est-int}
\int_{B_r^M(x)}|\nabla^i u_k|^l\,dy\leq C(n)r^{2m-il},\quad 1\leq l<\frac{2m}{i},~\forall x\in M,~0<r<r_{inj}.
\end{equation}	
where $r_{inj}$ is the injectivity radius of $(M,g).$
\end{lemma}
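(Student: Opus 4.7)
The plan is to combine the Green's function representation of $u_k$ coming from Lemma \ref{expGreen} with a Minkowski--Fubini argument, in the spirit of the Brezis--Merle estimates for equations with $L^1$ right-hand side, adapted to the higher-order operator $P_g^{2m}$.

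\emph{Step 1 ($L^1$ bound on the right-hand side).} I set $f_k := \widetilde Q e^{2mu_k} - Q_g^{2m}$, so that \eqref{2.u} reads $P_g^{2m} u_k = f_k$. Since $P_g^{2m}$ is self-adjoint with kernel equal to the constants, pairing the equation with the test function $1$ yields $\int_M \widetilde Q e^{2mu_k} \, dvol_g = \int_M Q_g^{2m} \, dvol_g$; the latter is a fixed finite quantity independent of $k$, so $\|f_k\|_{L^1(M)} \leq C$ uniformly in $k$.

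\emph{Step 2 (Green's representation and derivatives of $G$).} By Lemma \ref{expGreen}, $u_k(x) - \bar u_k = \int_M G(x,y) f_k(y) \, dvol_g(y)$. The leading singular part of $G(x,y)$ is a multiple of $\log(1/d(x,y))$, whose $i$-th derivative in $x$ is of order $d(x,y)^{-i}$. Combining this with the regularity of the smooth remainder, refined by iterating a parametrix of $P_g^{2m}$ in normal coordinates via Lemma \ref{lea.m} (so that $P_g^{2m}$ reduces to $(-\Delta)^m$ plus lower-order operators in $\Rm$), I would establish the pointwise bound $|\nabla_x^i G(x,y)| \leq C\, d(x,y)^{-i}$ for all $1 \leq i \leq 2m-1$. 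Differentiating under the integral then gives
\begin{equation*}
|\nabla^i u_k(x)| \leq C \int_M \frac{|f_k(y)|}{d(x,y)^i} \, dvol_g(y).
\end{equation*}

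\emph{Step 3 (Minkowski and conclusion).} Applying Minkowski's integral inequality with exponent $l$,
\begin{equation*}
\|\nabla^i u_k\|_{L^l(B_r^M(x_0))} \leq C \int_M |f_k(y)| \left(\int_{B_r^M(x_0)} \frac{dvol_g(x)}{d(x,y)^{il}}\right)^{1/l} dvol_g(y),
\end{equation*}
I reduce the proof to a uniform-in-$y$ bound of the inner integral by $C\, r^{2m-il}$. For $y \in B_{2r}^M(x_0)$, polar coordinates centered at $y$ give $\int_0^{3r} s^{-il} s^{2m-1} ds \leq C r^{2m-il}$, where integrability near $0$ forces exactly the assumption $l < 2m/i$; for $y \notin B_{2r}^M(x_0)$, the pointwise inequality $d(x,y) \geq r$ on $B_r^M(x_0)$ bounds the integral by $r^{-il}|B_r^M(x_0)| \leq C r^{2m-il}$. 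Combined with the $L^1$ bound of Step 1, $\|\nabla^i u_k\|_{L^l(B_r^M(x_0))} \leq C r^{(2m-il)/l}$, and raising to the $l$-th power yields the claim. The main obstacle is the derivative bound on $G$ itself: Lemma \ref{expGreen} only asserts $C^{2,\alpha}$ regularity of the smooth part $K$, which is insufficient once $i > 2$; to handle this I would extract further explicit singular terms of the form $d^{2j}\log(1/d)$ (arising from the parametrix of $(-\Delta)^m$ in $\Rm$) and control the remaining smoother piece by standard Schauder/$L^p$ estimates, thereby justifying $|\nabla_x^i G| \leq C d^{-i}$ up to $i = 2m-1$.
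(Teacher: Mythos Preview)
Your proof is correct and follows essentially the same route as the paper: Green's representation, the pointwise bound $|\nabla^i G(x,y)| \leq C\, d_g(x,y)^{-i}$, and then an integral inequality plus Fubini. The only cosmetic differences are that the paper applies Jensen's inequality (treating $|f_k|/\|f_k\|_{L^1}$ as a probability measure) in place of your Minkowski step, and it simply cites \cite[Lemma~2.1]{ndiaye} for the derivative bound on $G$ rather than re-deriving it via a parametrix, so your concern about the $C^{2,\alpha}$ regularity of $K$ is not an obstacle here.
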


\begin{proof}
Set $f_k:=\widetilde Qe^{2mu_k}-Q_g^{2m}$, which is bounded in $L^1(M)$. By Green's representation formula we have
\begin{equation}
\label{2.rep-u}
u_k(x)=\dashint_Mu_k\,dvol_g+\int_MG(x,y)f_k(y)\,dvol_g(y).		
\end{equation}	
For $x,y\in M,~x\neq y$, we have (see \cite[Lemma2.1]{ndiaye})
\begin{equation}
\label{2.est-g}
|\nabla^i_yG(x,y)|\leq\frac{C}{d_g(x,y)^i},\quad 1\leq i\leq 2m-1.
\end{equation} 
Then differentiating \eqref{2.rep-u} and using \eqref{2.est-g} and Jensen's inequality, we get
\begin{equation*}
\begin{aligned}
|\nabla^i u_k(x)|^l\leq~& 	C\left(\int_M\frac{1}{d_g(x,y)^i}|f_k(y)|\,dvol_g\right)^l\\
\leq~&C\int_M\left(\frac{\|f_k\|_{L^1(M)}}{d_g(x,y)^i}\right)^l
\frac{|f_k(y)|}{\|f_k\|_{L^1(M)}}\,dvol_g.
\end{aligned}
\end{equation*}	
From Fubini's theorem we conclude that
\begin{equation}
\label{2.estu_k}
\int_{B_r^M(x)}|\nabla^i u_k(x)|^l\,dvol_g\leq C\sup_{y\in M}\int_{B_r^M(x)}\frac{\|f_k\|_{L^1(M)}^l}{d_g(x,z)^{il }}\,dvol_g(z)	
\leq Cr^{2m-il}.
\end{equation}
It proves the lemma.
\end{proof}

Next, we shall give the minimal local mass around a blow-up point.

\begin{lemma}
\label{le2.2}
Let the sequence $u_k$ satisfy \eqref{2.u} and blowing-up at $q_j$. Suppose that
\begin{equation*}
\widetilde Qe^{2mu_k}\rightharpoonup\mathfrak{m},~\mbox{weakly in the sense of measures in M},
\end{equation*}
then 
$$\mathfrak{m}(q_j)\geq\frac12\min\{\Lambda_{m}(1+\alpha_j),\Lambda_{m}\}.$$
\end{lemma}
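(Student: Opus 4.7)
The plan is to argue by contradiction. Suppose $\mathfrak m(q_j)<\beta:=\tfrac12\min\{\Lambda_m(1+\alpha_j),\Lambda_m\}$, so that one may fix a radius $r_0>0$ small with
$$
\int_{B_{r_0}^M(q_j)}\widetilde Q e^{2mu_k}\,dvol_g\leq\beta-\delta
$$
for some $\delta>0$ and every $k$ large. The goal is to upgrade this small-mass property to a uniform $L^\infty$ bound on $u_k$ in a smaller ball around $q_j$, contradicting $q_j\in\mathcal B$.

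I would work in conformal normal coordinates centered at $q_j$ (Remark~\ref{rem-local}), so that \eqref{2.u} reads
$$
(-\Delta)^m u_k+\mathcal P^{2m}u_k+\mathcal P^{2m-1}u_k=\widetilde Qe^{2mu_k}-Q_g^{2m}\quad\text{in }B_{r_0}\subset\mathbb R^{2m}.
$$
The coefficients of $\mathcal P^{2m}$ vanish to second order at the origin, so $\mathcal P^{2m}$ is a small perturbation of $(-\Delta)^m$ on a sufficiently small ball, while $\mathcal P^{2m-1}u_k$ is locally controlled in $L^q$ by the estimates of Lemma~\ref{le2.1}. Decompose $u_k=v_k+h_k$ on $B_{r_0}$, where $v_k$ is the logarithmic Riesz potential of the right-hand side,
$$
v_k(x)=\frac{2}{\Lambda_m}\int_{B_{r_0}}\log\frac{1}{|x-y|}\widetilde Qe^{2mu_k}(y)\,dy,
$$
and $h_k:=u_k-v_k$ is polyharmonic in $B_{r_0}$ up to $L^q$ sources. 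Choosing $r_0$ so that $\partial B_{r_0}^M(q_j)$ avoids the (discrete) blow-up set, $u_k$ is uniformly bounded on this sphere and $h_k$ is uniformly bounded in $L^\infty_{\mathrm{loc}}(B_{r_0})$ by standard elliptic regularity.

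The core of the argument is a Br\'ezis-Merle estimate for $v_k$, in the spirit of the polyharmonic arguments of \cite{AhmedouWuZhang}. Jensen's inequality applied to the probability measure $d\mu_k/\sigma_k$, with $d\mu_k=\widetilde Qe^{2mu_k}\chi_{B_{r_0}}dy$ and $\sigma_k=\|\mu_k\|\leq\beta-\delta$, yields for every $q>0$
$$
e^{qv_k(x)}\leq\int_{B_{r_0}}|x-y|^{-2q\sigma_k/\Lambda_m}\,\frac{d\mu_k(y)}{\sigma_k}.
$$
Multiplying by the weight $\widetilde Q^{1+\eta}(x)\sim|x|^{2m\alpha_j(1+\eta)}$ and integrating in $x$ by Fubini reduces matters to bounding
$$
\int_{B_{r_0}}\frac{|x|^{2m\alpha_j(1+\eta)}}{|x-y|^{2q\sigma_k/\Lambda_m}}\,dx
$$
uniformly in $y$, which holds precisely when $2q\sigma_k/\Lambda_m<2m$ (integrability of the Riesz kernel away from $q_j$) and $2q\sigma_k/\Lambda_m-2m\alpha_j(1+\eta)<2m$ (integrability at the conical point). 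Choosing $q=2m(1+\eta)$ for small $\eta>0$, the hypothesis $\sigma_k<\tfrac12\min\{\Lambda_m(1+\alpha_j),\Lambda_m\}$ is \emph{exactly} what makes both inequalities hold with room to spare; this is the arithmetic origin of the factor~$\tfrac12$ in the statement.

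Combining with the $L^\infty$ bound on $h_k$ gives $\widetilde Qe^{2mu_k}$ uniformly bounded in $L^{1+\eta}$ locally near $q_j$; plugging this back into \eqref{2.u} and applying interior $L^p$ elliptic regularity for the polyharmonic operator (absorbing $\mathcal P^{2m}u_k$ via the smallness of its coefficients and $\mathcal P^{2m-1}u_k$ via Lemma~\ref{le2.1}) delivers a uniform $W^{2m,1+\eta}\hookrightarrow L^\infty$ bound on $u_k$ near $q_j$, contradicting $q_j\in\mathcal B$. The principal obstacle is the simultaneous bookkeeping of the singular weight $\widetilde Q$ and the Jensen exponent so that the critical threshold matches exactly $\tfrac12\min\{\Lambda_m(1+\alpha_j),\Lambda_m\}$; a secondary technical point is the treatment of the polyharmonic remainder $h_k$ together with the lower-order perturbations coming from the conformal normal coordinate expansion, both of which are handled by the a priori estimates of Lemma~\ref{le2.1}.
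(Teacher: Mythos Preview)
Your contradiction set-up and the Jensen/Fubini computation on the logarithmic potential $v_k$ are correct and recover exactly the threshold $\tfrac12\min\{\Lambda_m(1+\alpha_j),\Lambda_m\}$; this is essentially the content of the exponential integrability result \cite[Theorem~7]{m2} that the paper quotes. The genuine gap is in your treatment of the remainder $h_k=u_k-v_k$. First, invoking a sphere that ``avoids the (discrete) blow-up set'' is circular: the finiteness of $\mathcal B$ is deduced \emph{from} this lemma via the pigeonhole argument on total mass, so at this stage nothing prevents blow-up points from accumulating on every sphere around $q_j$. Second, even granting discreteness, knowing $h_k$ on a single sphere is one boundary condition for a $2m$-th order operator and is nowhere near enough to propagate an $L^\infty$ bound to the interior (there is no maximum principle for $m>1$). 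Third, your claim that $h_k$ is ``polyharmonic up to $L^q$ sources'' hides the term $\mathcal P^{2m}v_k$ (equivalently $\mathcal P^{2m}u_k$), which involves full $2m$-th derivatives of a function whose $m$-Laplacian is only in $L^1$; these derivatives are a priori only in weak-$L^1$, so the $W^{2m,q}\hookrightarrow L^\infty$ bootstrap for $h_k$ is blocked before the $L^{1+\eta}$ bound on $\widetilde Qe^{2mu_k}$ is available.

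The paper avoids all three issues by bounding the remainder through the \emph{global} Green representation on $M$ rather than through local boundary data. Writing $u_k(x)=\bar u_k+\int_M G(x,y)\big(\widetilde Qe^{2mu_k}-Q_g^{2m}\big)\,dvol_g$ and subtracting the local $\log$-potential (whose singularity matches that of $G$ by Lemma~\ref{expGreen}) gives directly $h_k=\bar u_k+O(1)$ on the small ball, with the $O(1)$ depending only on $\|\widetilde Qe^{2mu_k}\|_{L^1(M)}$. The average $\bar u_k$ is then bounded above by Jensen's inequality applied to the original (smooth-weight) conformal factor, using that $\int_M Q_{g_v}^{2m}e^{2mv}=\int_M\widetilde Qe^{2mu_k}$ is uniformly bounded and $Q_{g_v}^{2m}>0$. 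With this single correction your scheme goes through; the paper's Navier-problem decomposition and its appeal to \cite[Theorem~7]{m2} are equivalent to your explicit Riesz-potential/Jensen computation, so the only substantive divergence is in how the non-singular part of $u_k$ is controlled.
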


\begin{proof}
To show the thesis, it suffices to prove that if the following inequality holds
\begin{equation}
\label{2.bcon}
\int_{B^M(q_j,2r)}\widetilde Qe^{2mu_k}\,dvol_g<\frac12\min\{\Lambda_{m}(1+\alpha_j),\Lambda_{m}\},\quad r<\frac{r_{inj}}{2},
\end{equation}	
then 
\begin{equation}
\label{2.est}
u_k\leq C~\mathrm{in}~B^M(q_j,r).
\end{equation}	
We study equation \eqref{2.u} in terms of the local normal coordinates at $q_j$ (see for example Lemma \ref{lea.m} and Remark \ref{rem-local}). By the exponential map we define the pre-image of $B^M(q_j,r)$ by $B_r(0)$ and we use the same notation to denote $x\in M$ and its pre-image. We decompose $u_k$ as $u_k=u_{1k}+u_{2k}$, where 	$u_{1k}$ is the solution of 
\begin{equation}
\label{2.u1}
\begin{cases}
(-\Delta)^mu_{1k}=\widetilde Qe^{2mu_k}\Xi_r(x), &\mathrm{in}~B_{2r}(0),\\
u_{1k}=\Delta u_{1k}=\cdots=(-\Delta)^{m-1}u_{1k}=0,\quad &\mathrm{on}~\partial B_{2r}(0),
\end{cases}
\end{equation}
where $\Xi_r(x)=dvol_g(x)/dx=1+O(r^2)$ due to the metric tensor $g_{ij}(x)=\delta_{ij}+O(r^2).$ By \cite[Theorem 7]{m2}, we have 
\begin{equation}
\label{2.int-est-11}
e^{2m\ell |u_{1k}|}\in L^1(B_{2r}(0))\quad \mbox{for}~\ell\in\left(0,\frac{\Lambda_m}{2\|\widetilde Qe^{2mu_k}\Xi_r(x)\|_{L^1(B_{2r}(0))}}\right)
\end{equation}
and
\begin{equation}
\label{2.int-est}
\int_{B_{2r}(0)}e^{2m\ell|u_{1k}|}dx\leq C(p)r^{2m}.
\end{equation}
Let $G_{r}(x,y)$ be the Green's function of $(-\Delta)^m$ on $B_{2r}(0)$ satisfying the Navier boundary condition, i.e., 
\begin{equation*}
\begin{cases}
(-\Delta)^mG_r(x,y)=\delta_x(y), &\mathrm{in}~B_{2r}(0),\\	
G_r(x,y)=\cdots=\Delta^{m-1}G_r(x,y)=0,\quad&\mathrm{on}~\partial B_{2r}(0).	
\end{cases}		
\end{equation*}	
$G_r(x,y)$ can be decomposed as 
\begin{equation*}
G_r(x,y)=-\frac{2}{\Lambda_m}\log|x-y|+R_r(x,y)
\end{equation*}	
with $R_r(x,y)$ a smooth function for $x,y\in B_{2r}.$ By the Green's representation formula we have
\begin{equation}
\label{2.rep}
u_{1k}(x)=-\frac{2}{\Lambda_m}\int_{B_{2r}(0)}\log|x-y|\widetilde Qe^{2mu_k}\Xi_r(y)dy+O(1),\quad x\in B_{3r/2}(0).
\end{equation}
Observe that 
$$
\widetilde Q= d_g(x,q_j)^{2m\alpha_j} \widehat Q
$$
where 
\begin{equation}
\label{2.q}
\begin{aligned}
\widehat Q=Q_{g_v}^{2m}e^{-m\Lambda_m\alpha_jR(x,q_j)-m\Lambda_m\sum_{i\neq j}^N\alpha_iG(x,q_i)}~\mbox{is a smooth function in}~B_{2r}(0).
\end{aligned}
\end{equation}
On the other hand, by using $G(x,y)$ and the Green's representation formula we get that
\begin{equation}
\label{2.repm}
\begin{aligned}
u_k(x)=~&u_{1k}(x)+u_{2k}(x)\\
=~&\overline{u}_k+\int_{M}
G(x,y)\widetilde Qe^{2mu_k}\,dvol_g-\int_MG(x,y)Q_g^{2m}dvol_g,
\end{aligned}
\end{equation}	
where $\overline{u}_k$ is the average of $u_k$. Since it is known that the leading term of $G$ and $G_{2r}$ carry the same singular behavior, we get from Jensen's inequality that
\begin{equation}
\label{2.u2}
u_{2k}=\overline{u}_k+O(1)=\overline{v}_k+O(1)\leq \log\dashint_{M}e^{v_k}+O(1)=O(1),
\end{equation}	
where we used the average of Green function on $M$ is zero. Therefore, we conclude that $u_{2k}$ is bounded uniformly from above. Next, we shall prove that $u_{1k}$ is bounded and our discussion is separated into two cases:

\

\noindent Case 1. $\alpha_j>0$, then \eqref{2.bcon} is equivalent to 
\begin{equation*}
\int_{B^M_{2r}(q_j)}\widetilde Qe^{2mu_k}\,dvol_g<\frac12\Lambda_m.
\end{equation*}	
Since $u_{2k}$ is bounded from above in $B_{3r/2}(0)$, we see from  \eqref{2.int-est} that there exists some $\ell>1$ such that $\widetilde Qe^{2mu_k}\in L^\ell(B_{2r}^M(q_1))$. It is easy to see that $u_k\in L^1(M)$. Together with \eqref{2.rep} we can easily see that $u_{1k},u_{2k}\in L^1(B_{3r/2}(0)).$ By the interior regularity results in \cite[Theorem 1]{b} we get that
\begin{equation}
\label{2.est-b}	
\|u_{1k}\|_{W^{2m,\ell}(B_{r}(0))}\leq \|\widetilde Qe^{2mu_k}\|_{L^\ell(B_{3r/2}(0))}+\|u_{1k}\|_{L^{1}(B_{r}(0))}\leq C.	
\end{equation}
Thus by the classical Sobolev inequality we get that $u_{1k}\in L^\infty(B_r(0))$. 

\medskip

\noindent Case 2. If $\alpha_j\in(-1,0)$ then \eqref{2.bcon} is equivalent to 
\begin{equation*}
\int_{B_{2r}^M(q_j)}\widetilde Qe^{2mu_k}\,dvol_g<\frac12\Lambda_m(1+\alpha_j).
\end{equation*}	
It is not difficult to see $|x|^{2m\alpha_j}\in L^{\ell}(B_{2r})$ for any $\ell\in[1,-\frac{1}{\alpha_j})$ and $e^{2mu_{1k}}\in L^p(B_{2r})$ for $p\in[1,\frac{1}{1+\alpha_1}+\epsilon)$ for some small strictly positive number $\epsilon$ by \eqref{2.int-est-11}. As a consequence, we get that $|x|^{2m\alpha_1}e^{2mu_{1k}}\in L^{\ell}(B_{2r}(0))$ for some $\ell>1$ by H\"older inequality. Repeating the arguments as in Case 1,  we obtain that $u_{1k}$ is bounded uniformly in $B_r(0)$. 

After establishing that $u_{1k}$ is bounded in $B_r(0)$, combining with \eqref{2.u2} we derive that $u_k$ is bounded above in $B_r^M(q_j)$. Then we finish the proof of this lemma.	
\end{proof}

\medskip

We shall derive now the quantization result and the concentration property of the bubbling solution.

\begin{proposition}
Let $\{u_k\}$ be a sequence of solutions to \eqref{2.u} and $\mathcal{B}$ be its blow-up set, then we have the following convergence in the sense of measures
\begin{equation}
\widetilde Q e^{2mu_k}\rightharpoonup \sum_{p\in\mathcal{B}}\Lambda_m(1+\alpha_p)\delta_p,\quad \mbox{as}~k\to+\infty,
\end{equation}
where
\begin{equation*}
\alpha_p=\begin{cases}
\alpha_i,\quad &\mbox{if}\quad p=q_i\in\{q_1,\cdots,q_N\},\\	
0,\quad  &\mbox{if}\quad  p\in \mathcal{B}\setminus\{q_1,\cdots,q_N\}	.
\end{cases}
\end{equation*}
In particular, $u_k\to-\infty$ uniformly on any compact subset of $M\setminus \mathcal{B}.$
\end{proposition}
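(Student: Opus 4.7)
The plan is to prove the proposition in three stages: (i) derive finiteness of $\mathcal B$ and locally uniform upper bounds on $u_k$ away from $\mathcal B$; (ii) perform a blow-up analysis around each $p\in\mathcal B$ and identify the entire limit profile; (iii) apply a Pohozaev-type identity together with Green's representation to pin the local masses at $\Lambda_m(1+\alpha_p)$ and to force $u_k\to-\infty$ on compacta of $M\setminus\mathcal B$. For (i), integrating \eqref{2.u} gives $\int_M \widetilde Qe^{2mu_k}\,dvol_g=\kappa_g$ uniformly in $k$, so the measures $\widetilde Qe^{2mu_k}\,dvol_g$ are bounded in $L^1$. Together with the quantitative lower bound of Lemma~\ref{le2.2}, this forces $\mathcal B$ to be finite; up to a subsequence we may write $\widetilde Qe^{2mu_k}\,dvol_g\rightharpoonup\mu=\sum_{p\in\mathcal B}m_p\delta_p+\mu_{\mathrm{ac}}$. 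For any $x\in M\setminus\mathcal B$ one chooses $r>0$ small enough that $\mu(\overline{B^M(x,2r)})$ lies below the threshold in Lemma~\ref{le2.2}, and then the proof of that lemma delivers a uniform $L^\infty$ upper bound on $u_k$ on $B^M(x,r)$.

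For stage (ii), fix $p\in\mathcal B$ and work in the conformal normal coordinates of Remark~\ref{rem-local} at $p$, writing locally $\widetilde Q(x)=|x-p|^{2m\alpha_p}\widehat Q(x)$ with $\widehat Q(p)>0$ as in \eqref{2.q}. Choose $x_k\to p$ with $u_k(x_k)=\max_{\overline{B(p,r_0)}}u_k\to+\infty$ and introduce the scale-invariant dilation $\lambda_k>0$ characterised by $\widehat Q(p)\lambda_k^{2m(1+\alpha_p)}e^{2mu_k(x_k)}=1$, together with $v_k(y):=u_k(p+\lambda_k y)+(1+\alpha_p)\log\lambda_k$ (when $p$ is regular one uses $\lambda_k=e^{-u_k(x_k)}$ and recenters at $x_k$). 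Combining Lemma~\ref{lea.m} and the perturbative structure of Remark~\ref{rem-local} with the scale-invariant estimates of Lemma~\ref{le2.1}, the family $v_k$ is uniformly bounded in $C^{2m,\gamma}_{\mathrm{loc}}(\mathbb R^{2m}\setminus\{0\})$ and converges along a subsequence to a finite-mass entire solution $V$ of
$$
(-\Delta)^m V=(2m-1)!\,|y|^{2m\alpha_p}e^{2mV}\ \text{on}\ \mathbb R^{2m},\qquad \int_{\mathbb R^{2m}}|y|^{2m\alpha_p}e^{2mV}\,dy<+\infty.
$$
The classification of such polyharmonic Liouville solutions (Lin, Wei--Xu, Martinazzi in the regular case; Chang--Chen and related works in the conical case) then identifies the bubble mass as $\Lambda_m(1+\alpha_p)$.

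For stage (iii), to upgrade the bubble mass to the full local mass $m_p$ one has to rule out additional mass in neck regions $B(p,\varepsilon)\setminus B(p,R\lambda_k)$. Here the Pohozaev-type identity from the last section is applied on these annuli: the boundary integrals are controlled using the scale-invariant estimates of Lemma~\ref{le2.1} together with the upper bound on $u_k$ outside the blow-up scale, while the bulk term captures the missing mass. Letting first $R\to+\infty$ and then $\varepsilon\to 0$ yields $m_p=\Lambda_m(1+\alpha_p)$. With quantization in hand, Green's representation from Lemma~\ref{expGreen} gives
$$
u_k(x)=\bar u_k+\sum_{p\in\mathcal B}m_pG(x,p)+H(x)+o(1)\qquad\text{for }x\in M\setminus\mathcal B,
$$
with $H$ smooth. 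Evaluating along the sequence $x_k$ and using both $u_k(x_k)=-(1+\alpha_p)\log\lambda_k$ and the leading logarithmic behaviour of $G(x_k,p)$ forces $\bar u_k\to-\infty$, whence $u_k\to-\infty$ uniformly on compacta of $M\setminus\mathcal B$ and $\mu_{\mathrm{ac}}=0$.

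The main obstacle is stage (iii). The Pohozaev identity for the polyharmonic operator $P_g^{2m}$ involves boundary integrals of derivatives of $u_k$ up to order $2m-1$ on annuli whose inner radius $R\lambda_k$ shrinks at the blow-up rate. Controlling these uniformly in the presence of the singular potentials $|x-q_j|^{2m\alpha_j}$, and in the possible presence of towers of bubbles forming at distinct scales near the same point, requires a careful selection of the intermediate radii $\varepsilon$ and $R\lambda_k$ and a delicate use of the scale-invariant estimates of Lemma~\ref{le2.1}.
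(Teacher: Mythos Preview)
Your overall strategy via rescaling and classification is natural, but it departs substantially from the paper's route, and stage~(ii) contains a real gap. The paper never rescales. Around a singular blow-up point $q_1$ it argues by contradiction: if $u_k$ does not diverge to $-\infty$ somewhere in the punctured ball, then the uniform derivative bounds coming from Green's representation and Lemma~\ref{le2.1} force $u_k\to u_0$ in $C^{2m-1,\sigma}_{\mathrm{loc}}(B^M_{2r}(q_1)\setminus\{q_1\})$, and the limit solves the original equation with an additional $\sigma_{q_1}\delta_{q_1}$ on the right-hand side. One then shows $u_0(x)=-\tfrac{2\sigma_{q_1}}{\Lambda_m}\log d_g(x,q_1)+v_0(x)$ with $v_0$ bounded, and applies the Pohozaev identity \eqref{a.poho1}--\eqref{a.poho2} to $u_0$ on a \emph{fixed} small ball $B_r(0)$, letting $r\to 0$. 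This directly gives $\sigma_{q_1}=\Lambda_m(1+\alpha_1)$, which contradicts the integrability of $d_g(x,q_1)^{2m\alpha_1}e^{2mu_0}$. Once $u_k\to-\infty$ is established, the same Pohozaev argument applied to $\hat u_k=u_k-\dashint_{\partial B_r(0)}u_k$ yields the quantization. No entire classification and no neck analysis are required.

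The concrete gap in your stage~(ii) is the assertion that the classification ``identifies the bubble mass as $\Lambda_m(1+\alpha_p)$.'' For $m\ge 2$ this is false without further input: the results of Lin and Wei--Xu assume an a priori growth condition such as $V(y)=o(|y|^2)$ at infinity, while Martinazzi's work shows precisely that finite-volume entire solutions of $(-\Delta)^mV=(2m-1)!\,e^{2mV}$ can carry masses in a whole interval, because nonconstant polyharmonic polynomials may appear in the asymptotic expansion of $V$. After rescaling, the estimates of Lemma~\ref{le2.1} translate into scale-invariant integral bounds on $\nabla^i v_k$, which by themselves do \emph{not} rule out such a polynomial part in the limit $V$ (and the situation is no better in the conical case, where the classification you invoke is largely open). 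Hence you cannot read the mass off the bubble, and then your neck Pohozaev in stage~(iii) inherits uncontrolled contributions from the inner boundary $\partial B(p,R\lambda_k)$. The paper's fixed-ball Pohozaev on the un-rescaled limit $u_0$ bypasses this difficulty entirely, at the price of giving no information on the fine profile of $u_k$ near $p$---which the proposition does not ask for.
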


\begin{proof}
For any compact set $K\subset M\setminus\mathcal{B}$, we can use the Green's representation formula
\begin{equation}
\label{2.p-1}
u_k(x)-u_k(y)=\int_{M}\left(G(x,z)-G(y,z)\right)\left(\widetilde Qe^{2mu_k}-Q_{g}^{2m}\right)dz
\end{equation}
together with the estimate \eqref{2.est-g} to derive that
\begin{equation}
\label{2.p-2}
|\nabla^i u_k(x)|\leq C(K)~\mbox{for}~x\in K,\quad  1\leq i\leq 2m-1.
\end{equation}	
Then from equation \eqref{2.u} and classical elliptic estimates  we get the $2m$ order derivatives of $u_k$
\begin{equation}
\label{2.p-3}
|\nabla^{2m}u_k(x)|\leq C(K)~\mbox{for}~x\in K.
\end{equation}

To proceed with our discussion we introduce the following quantity 
\begin{equation}
\label{2.p-l}
\sigma_p=\lim_{r\to0}\lim_{k\to+\infty}\int_{B^M(p,r)}\widetilde Qe^{2mu_k}\,dvol_g.
\end{equation}
It has been shown in Lemma \ref{le2.2} that $\sigma_p$ has a positive lower bound at the blow-up point. From the fact that $\int_M Q_g^{2m}\,dvol_g$ is finite, we conclude that the blow-up points are finitely-many. At a regular blow-up point $p$ it has been already shown in \cite[Theorem 2]{m1} that
\begin{equation*}
\widetilde Qe^{2mu_k}\rightharpoonup \Lambda_m\delta_p \quad\mathrm{in}\quad B^M_{r_p}(p),
\end{equation*} 
where $r_p$ is chosen such that $B^M(p,r_p)\cap(\mathcal{B}\setminus\{p\})=\emptyset$. In the following discussion  we will focus on the singular blow-up point. Without loss of generality, we shall consider $u_k$ in $B^M_{2r}(q_1)$, where $r$ is chosen such that $B^M_{2r}(q_1)$ only contains $q_1$ from $\mathcal{B}$. We first claim that
\begin{equation}
\label{2.p.claim}
u_k\to-\infty\quad\mbox{for}\quad x\in B_{2r}^M(q_1)\setminus\{q_1\}.
\end{equation}
We prove it by contradiction. Suppose that $u_k$ is uniformly bounded below at some point away from $q_1$. Then by \eqref{2.p-2} we derive that
\begin{equation}
\label{2.p.conv}
u_k\to u_0\quad\mbox{in}\quad C_{\mathrm{loc}}^{2m-1,\sigma}(B^M_{2r}(q_1)\setminus\{q_1\}),	\quad \sigma\in(0,1),
\end{equation}	
with the limit function verifying 
\begin{equation}
P_g^{2m}u_0+Q_g^{2m}=d_g(x,q_1)^{2m\alpha_1}\widehat Qe^{2mu_0}\quad \mathrm{in}\quad B^M_{2r}(q_1)\setminus \{q_1\},
\end{equation}
where $\widehat Q$ is a smooth function around $q_1$ defined analogously as in \eqref{2.q} and $d_g(x,q_1)$ denotes the geodesic distance between $x$ and $q_1$ with respect to the metric $g$. According to the definition of $\sigma_p$ (see \eqref{2.p-l}) we see that $u_0$ satisfies 
\begin{equation*}
P_g^{2m}u_0+Q_g^{2m}=d_g(x,q_1)^{2m\alpha_1}\widehat Qe^{2mu_0}+\sigma_{q_1}\delta_{q_1}\quad \mbox{in}\quad B_{2r}^M(q_1).
\end{equation*} 
Using the Green's representation formula for $u_0$, we have
\begin{equation}
\label{2.p-re}
u_0(x)=\sigma_{q_1}G(x,q_1)+v_0(x),
\end{equation}	
where 
\begin{equation}
\label{2.p-v}
\begin{aligned}
v_0(x)=&\int_M\frac{2}{\Lambda_m}\log d_g(x,y)\left(d_g(y,q_1)^{2m\alpha_1}\widehat Qe^{2mu_0}\right)\,dvol_g\\
&+\int_MR(x,y)\left(d_g(y,q_1)^{2m\alpha_1}\widehat Qe^{2mu_0}\right)\,dvol_g\\
&+\bar u_0-\int_MG(x,y)Q_g^{2m}dvol_g.
\end{aligned}
\end{equation}	
Denoting the two terms on the right hand side by $\hat v_1$ and $\hat v_2$ respectively, it is not difficult to see that $\hat v_2$ is smooth. In the following we shall prove that $\hat v_1(x)$ is bounded in $x\in B^M_r(q_1)$. In fact, for $x\in B^M_r(q_1)$ we have 	
\begin{equation}
\label{2.p-low}
\begin{aligned}
v_0(x)~&=\int_{B^M_r(q_1)}G(x,y)d_g(y,q_1)^{2m\alpha_1}\widehat Qe^{2mu_0}\,dvol_g+O(1)\\
&\geq\frac{2}{\Lambda_m}\log\frac{1}{r}\|\widehat Q|x|^{2m\alpha_1}e^{2mu_0}\|_{L^1(B^M_{2r}(q_1))}+O(1).
\end{aligned}
\end{equation}
This provides a lower bound for $v_0(x)$. On the other hand, we have
\begin{equation}
\label{2.p.lb}
d_g(x,q_1)^{2m\alpha_1}e^{2mu_0}
\geq Cd_g(x,q_1)^{2m(\alpha_1-\frac{2\sigma_{q_1}}{\Lambda_m})}.
\end{equation}
Using the fact that the left hand side of \eqref{2.p.lb} is integrable we get 
\begin{equation}
\label{2.ine-alpha}
\alpha_1-\frac{2\sigma_{q_1}}{\Lambda_m}>-1.
\end{equation}
When $\alpha_1<0$, we see that the above inequality \eqref{2.ine-alpha} implies that $\sigma_{q_1}<\frac12\Lambda_m(1+\alpha_1)$. Then $u_k$ can not blow-up at $q_1$ by Lemma \ref{le2.2} and we get a contradiction. This implies \eqref{2.p.claim} for $\alpha_1<0$. For $\alpha_1>0$ we have 
\begin{equation}
\label{2.p.lb-2}
Cd_g(x,q_1)^{2m\alpha_1-\frac{4m\sigma_{q_1}}{\Lambda_m}}
e^{v_0(x)}
\geq d_g(x,q_1)^{2m\alpha_1}e^{2mu_0}\geq 
Cd_g(x,q_1)^{2m\alpha_1-\frac{4m\sigma_{q_1}}{\Lambda_m}}.
\end{equation}	
In order to show that $\hat v_1(x)$ is bounded in $B_r^M(q_1)$, we study $\hat v_1(x)$ in terms of local coordinates at $q_1$. Then $d_g(x,q_1)$ can be regarded as $|x^p-0|$, where $x^p$ denotes its pre-image of $x$ under the exponential map at $q_1$. By a little abuse of notation, we still denote $x^p$ by $x$. Then we notice that $\hat v_1(x)$ satisfies  
\begin{equation}
\label{2.p.eq}
(-\Delta)^{m}\hat v_1(x)=|x|^{2m\alpha_1}\widehat Qe^{2mu_0}\Xi(x)\quad\mbox{in}\quad B_{2r}(0),
\end{equation}
where $\Xi(x)=\frac{dvol_g(x)}{dx}$ is bounded above and below in $B_{2r}(0)$ since the metric tensor is comparable to the standard Euclidean metric. Since we only consider the local behavior of $\hat v_1(x)$ in $B_r(0)$, by multiplying a cut-off function $\chi(x)$ with $\chi(x)=1$ for $|x|\leq r$ and $\chi(x)=0$ for $|x|\geq 2r$, we have $\tilde v_1(x):=\chi(x)\hat v_1(x)$ verifies   
\begin{equation}
\label{2.eq-v1}
\begin{cases}
(-\Delta)^m\tilde v_1(x)=|x|^{2m\alpha_1}\widehat Qe^{2mu_0}\Xi(x)+\Xi_0(x)
\quad  &\mbox{in}\quad  B_{2r}(0),\\
\\
\tilde v_1(x)=\Delta \tilde v_1(x)=\cdots=\Delta^{m-1}\tilde v_1(x)=0 \quad  &\mbox{on}\quad \partial B_{2r}(0),
\end{cases}
\end{equation}	
where $\Xi_0(x)$ is smooth in $B_{2r}(0)$. It is not difficult to see that 
$$|x|^{2m\alpha_1}\widehat Qe^{2mu_0}\Xi(x)+\Xi_0(x)\in L^1(B_2r(0)).$$ 
We decompose $|x|^{2m\alpha_1}\widehat Qe^{2mu_0}\Xi(x)+\Xi_0(x)$ into $P_1(x)$ and $P_2(x)$ with 
\begin{equation}
\label{2.p-condition}
\|P_1\|_{L^1(B_{2r}(0))}\leq \varepsilon\quad \mathrm{and}\quad P_2\in L^\infty(B_{2r}(0)).
\end{equation}
Correspondingly we decompose $\tilde v_1$ into $\tilde v_{11}$ and $\tilde v_{12}$, where $\tilde v_{1i},~i=1,2$ solve
\begin{equation}
\label{2.eq-v1i}
\begin{cases}
(-\Delta)^m\tilde v_{1i}(x)=P_i(x)
\quad  &\mbox{in}\quad B_{2r}(0),\\
\\
\tilde v_{1i}(x)=\Delta \tilde v_{1i}(x)=\cdots=\Delta^{m-1}\tilde v_{1i}(x)=0 \quad  &\mbox{on}\quad \partial B_{2r}(0).
\end{cases}
\end{equation}	
For $\tilde v_{11},$ by \cite[Theorem 7]{m2} we get that 
$e^{\frac{\Lambda_m}{2\e}\tilde v_{11}}\in L^1(B_{2r}(0)).$
While for $\tilde v_{12}$, using the classical elliptic regularity theory we derive that $\tilde v_{12}\in L^\infty(B_{2r}(0))$. Together with  \eqref{2.p.lb-2} we can select $\varepsilon$ sufficiently small such that 
$$|x|^{2m\alpha_1}\widehat Qe^{2mu_0}\Xi(x)\in L^l(B_{2r}(0))\quad \mbox{for some}\quad l>1.$$
Returning to equation \eqref{2.eq-v1} we apply the regularity theory to deduce that $\hat v_1\in W^{2m,l}(B_{2r}(0))$. As a consequence, we have $v_0\in W^{2m,l}(B_{2r}(0))$ and it implies that $|v_0|\leq  C$ for some constant $C$ in $B_{r}(0)$ by the classical Sobolev inequality. Thus we have proved $v_0$ is bounded in $B^M_r(q_1)$. Together with \eqref{2.p.lb-2} we get that
\begin{equation}
\label{2.p-bb}
d_g(x,q_1)^{2m\alpha_1}\widehat Qe^{2mu_0}
\sim{d_g(x,q_1)^{2m(\alpha_1-\frac{2\sigma_{q_1}}{\Lambda_m})}}\quad \mbox{if}\quad  \alpha_1>0.
\end{equation}
Next we shall derive a contradiction by making use of the Pohzozaev identity. It is known that equation \eqref{2.u} can be written as
\begin{align}
\label{2.i-0}
(-\Delta_g)^{m}u_k(x)+\mathcal{A}u_k+Q_g^{2m}=\widetilde Qe^{2mu_k}\quad \mathrm{in}\quad B_r^M(q_1),
\end{align}	
where $\mathcal A$ is a linear differential operator of order at most $2m-1$, moreover the coefficients of $A$ belong to $C_{\mathrm{loc}}^l(M)$ for all $l\geq 0$. Using the local normal coordinates, by Lemma \ref{lea.m} (see also Remark \ref{rem-local}) we could write \eqref{2.i-0} as
\begin{equation}
\label{2.i-1}
(-\Delta)^mu_k+\mathcal{D}^{2m}u_k+\mathcal{C}u_k+Q_g^{2m}=\widetilde Qe^{2mu_k}\quad \mathrm{in}\quad B_r(0),
\end{equation}
where $\mathcal{D}^{2m}$ is a linear differential operator of order $2m$ and the coefficients are of order $O(|x|^2)$ with its derivative of arbitrary order smooth, while $\mathcal{C}$ is a linear differential operator of order at most $2m-1$, and the coefficients of $B_k$ belong to $C_{\mathrm{loc}}^l(\Rm)$ for all $l\geq 0$. Multiplying by $x\cdot\nabla u_k$ on both sides, concerning the right hand side, we have
\begin{equation}
\label{2.i-r}
\begin{aligned}
\mbox{R.H.S. of }\eqref{2.i-1}=~&\frac{1}{2m}\int_{B_r(0)}x\cdot\nabla (|x|^{2m\alpha_1}\widehat Qe^{2m u_k})\,dx
-\alpha_1\int_{B_r(0)}|x|^{2m\alpha_1}\widehat Qe^{2m u_k}\,dx\\
&-\frac{1}{2m}\int_{B_r(0)}(x\cdot\nabla\widehat Q)|x|^{2m\alpha_1}e^{2m u_k}\,dx\\
=~&\frac{1}{2m}\int_{\partial B_r(0)}|x|^{2m\alpha_1+1}\widehat Qe^{2m u_k}\,ds
-(\alpha_1+1)\int_{B_r(0)}|x|^{2m\alpha_1}\widehat Qe^{2m u_k}\,dx\\
&-\frac{1}{2m}\int_{B_r(0)}(x\cdot\nabla\widehat Q)|x|^{2m\alpha_1}e^{2m u_k}\,dx\\
\to~&-(1+\alpha_1)\sigma_{q_1}+o_r(1)\quad\mathrm{as}\quad  k\to+\infty.
\end{aligned}	
\end{equation}
Next, we consider the left hand side of \eqref{2.i-1}. At first, for the fourth term, we have
\begin{equation}
\label{2.i-l-3}
\left|\int_{B_r(0)}\langle x,\nabla u_k\rangle Q_g^{2m}dx\right|
\leq r\int_{B_r(0)}|\nabla u_k|dx\leq Cr^{2m},
\end{equation}
where we used Lemma \ref{le2.1}. Therefore
\begin{equation}
\label{2.i-l-3-1}
\lim_{r\to0}\lim_{k\to+\infty}\left|\int_{B_r(0)}\langle x,\nabla u_k\rangle Q_g^{2m}dx\right|=0.
\end{equation}
For the third term, we have
\begin{equation}
\label{2.i-l1}
\begin{aligned}
\left|\int_{B_r(0)}\langle x,\nabla u_k\rangle\mathcal{C}u_k\right|	
\leq ~&C\sum_{i=0}^{2m-1}\int_{B_r(0)}|x||\nabla^iu_k||\nabla u_k|dx\\
\leq~&\sum_{i=1}^{2m-1}
\left(\int_{B_r(0)}|x|^{s_i}|\nabla u_k|^{s_i}dx\right)^{\frac{1}{s_i}}\left(\int_{B_r(0)}|\nabla^iu_k|^{t_i}dx\right)^{\frac{1}{t_i}}\\
&+C\int_{B_r(0)}|\nabla u_k|dx,	
\end{aligned}
\end{equation}
where we used $|x| |u_k|\leq C$ in $B_r(0)$,
$$t_i=\frac{2m}{i}-\delta\quad\mbox{and}\quad
s_i=\frac{2m-\delta i}{2m-i-\delta i},\qquad \delta\in\left(0,\frac{1}{2(2m-1)}\right).$$
From \eqref{2.p-re}, \eqref{2.p-v} and since $\hat v_1,~\hat v_2$ are bounded from above, we can see that $|x\cdot\nabla u_0|\leq C$ in $B_r(0)$. Together with Lemma \ref{le2.1} we have 
\begin{equation}
\label{2.i-l2}
\int_{B_r(0)}|x|^{s_i}|\nabla u_0|^{s_i}dx\leq Cr^{2m},
\quad\mbox{and}\quad
\begin{aligned}
\int_{B_r(0)}|\nabla^iu_0|^{t_i}dx\leq Cr^{i\delta}.
\end{aligned}
\end{equation}
As a consequence of \eqref{2.i-l1} and \eqref{2.i-l2}, we see that
\begin{equation}
\label{2.i-l-2}
\lim_{r\to0}\lim_{k\to+\infty}\left|\int_{B_r(0)}\langle x,\nabla u_k\rangle\mathcal{C}u_k\right|\,dx=0.
\end{equation}
For the second term, we have already seen that $v_0\in W^{2m,l}(B_r(0))$ for some $l>1$, then using \eqref{2.p-re}, we see that
\begin{equation*}
\int_{B_r(0)}x\cdot\nabla u_0 \mathcal{D}^{2m}u_0\,dx
\leq C\int_{B_r(0)}\frac{1}{|x|^{2m-2}}\,dx+Cr^2\|v_0\|_{W^{2m,1}(B_{r}(0))}\leq Cr^2.
\end{equation*}
It leads to 
\begin{equation}
\label{2.i-l-22}
\lim_{r\to0}\lim_{k\to+\infty}\left|\int_{B_r(0)}\langle x,\nabla u_k\rangle\mathcal{D}^{2m}u_k\,dx\right|=0.
\end{equation}
Therefore, from \eqref{2.i-l-3-1}, \eqref{2.i-l-2} and \eqref{2.i-l-22} we get that except from the first term on the left hand side of \eqref{2.i-1}, the other terms vanish in the limit. It remains to study the term $\int_{B_r(0)}(-\Delta)^mu_k x\cdot\nabla u_kdx$. We shall only consider the case when $m$ is even, the argument for the case $m$ is odd goes almost the same. We set $m=2m_0$. Using the Pohozaev identity \eqref{a.poho1}, replacing $f$ by $\frac{2\sigma_{q_1}}{\Lambda_m}\log|x|$ plus a smooth function, after direct computations we get that
\begin{equation}
\label{2.il-1}
\begin{aligned}
&\int_{B_r(0)}(-\Delta)^mu_0\langle x,\nabla u_0\rangle dx\\
&=\sum_{i=2}^{m_0}\int_{\partial B_r(0)}2^{2m}(m-1)!(m-1)!\left(1-\frac{i-1}{m-i}\right)\frac{\sigma_{q_1}^2}{\Lambda_m^2}\frac{1}{r^{2m-1}}ds\\
&\quad +\sum_{i=1}^{m_0}\int_{\partial B_r(0)}2^{2m}(m-1)!(m-1)!\left(\frac{i-1}{m-i}-1\right)\frac{\sigma_{q_1}^2}{\Lambda_m^2}\frac{1}{r^{2m-1}}ds\\
&\quad -\int_{\partial B_r(0)}2^{2m-1}(m-1)!(m-1)!\frac{\sigma_{q_1}^2}{\Lambda_m^2}\frac{1}{r^{2m-1}}ds+o_r(1)
\\
&\to -2^{2m-1}(m-1)!(m-1)!\frac{\sigma_{q_1}^2}{\Lambda_m^2}|\mathbb{S}^{2m-1}|\quad \mathrm{as}\quad r\to0.
\end{aligned}
\end{equation} 
It is known that we can write 
\begin{equation*}
\Lambda_m=(2m-1)!|\mathbb{S}^{2m}|=2^{2m-1}(m-1)!(m-1)!|\mathbb{S}^{2m-1}|.
\end{equation*}
Together with \eqref{2.i-r} and \eqref{2.il-1} we derive that
\begin{equation}
\label{2.i-final}
(1+\alpha_1)\sigma_{q_1}=\frac{\sigma_{q_1}^2}{\Lambda_m}.
\end{equation}
Recalling also Lemma \ref{le2.2} we get
\begin{equation}
\label{2.i-final-1}
\sigma_{q_1}=(1+\alpha_1)\Lambda_m.
\end{equation}
Returning to equation \eqref{2.p-bb} we see that
\begin{equation}
d_g(x,q_1)^{2m\alpha_1}\widehat Qe^{2mu_0}
\sim d_g(x,q_1)^{-2m-2m(1+\alpha_1)}\geq d_g(x,q_1)^{-2m},
\end{equation}	
which contradicts  	$d_g(x,q_1)^{2m\alpha_1}\widehat Qe^{2mu_0}\in L^1(B_r(0))$. Therefore, $u_k\to-\infty$ uniformly on any compact subset of $B^M_{2r}(q_1)\setminus\{q_1\}$. 

\medskip

It remains to show the quantization $\sigma_{q_1}$ is exactly $\Lambda_m(1+\alpha_1)$. We consider the function $\hat u_k(x)=u_k(x)-c_k$, with $c_k=\dashint_{\partial B_r(0)}u_k(x)\to-\infty$. As before, we can show that $\hat u_k(x)$ converges to some function $\hat u_0(x)$ in $C_{\mathrm{loc}}^{2m}(B_{2r}\setminus\{0\})$ and we can write
$$\hat u_0(x)=-\frac{2\sigma_{q_1}}{\Lambda_m}\log d_g(x,q_1)+\hat v(x).$$
Repeating the previous argument, again by the Pohozaev identity we derive that $\sigma_{q_1}=\Lambda(1+\alpha_1)$ and we finish the proof.
\end{proof}

\

\section{Existence result}

In this section we are going to prove the existence and multiplicity results of Theorems \ref{th1.2} and \ref{th-mult}. To make the exposition more transparent we assume hereafter for simplicity $P_g^{2m}\geq0$. The general case can be treated by suitable adaptations, see Remark \ref{rem-general}.

\medskip
	
Solutions of \eqref{1.mliouville} are critical points of the functional
\begin{equation}\label{functional}
\begin{aligned}
\mathcal E(u)=~&2m \int_M uP^{2m}_gu\, dvol_g +4m \int_M \Bigr(Q^{2m}_g+\dfrac{\Lambda_m}{2|M|} \sum_{j=1}^N \alpha_j  \Bigr)u\,dvol_g\\
&-2\kappa_{g_v} \log \int_{M} \widetilde Qe^{2mu}\,dvol_g
\end{aligned}
\end{equation}
with $u \in H^{m}(M)$, where we recall
$$
	\widetilde Q=  Q_{g_{v}}^{2m}e^{-m\Lambda_m\sum_{j=1}^N\alpha_jG(x,q_j)}>0, \quad \Lambda_m=(2m-1)!|\mathbb{S}^{2m}|
$$
and 
$$
  \kappa_{g_v}=\int_M\widetilde Qe^{2mu}\,dvol_g=\int_MQ_{g}^{2m}\,dvol_{g}+\dfrac{\Lambda_m}{2} \sum_{j=1}^N \alpha_j.
$$
We point out we consider here 
$$
\alpha_j>0 \quad \forall j=1,\dots,N.
$$ 
Then, in particular, the Adams-Trudinger-Moser inequality in Theorem \ref{ATM} implies the functional $\mathcal E$ is coercive and bounded from below provided $\kappa_{g_v}<\Lambda_m$. Thus, existence of solutions in this subcritical case follows by direct method of calculus of variations.

\medskip

In the supercritical case $\kappa_{g_v}>\Lambda_m$ the functional is unbounded from below and we need to apply a min-max method based on the  topology of the sublevels of the functional
$$
	\mathcal E^a =\bigr\{ u\in H^m(M) \,:\, \mathcal E(u)\leq a \bigr\}.
$$ 
The rough idea is that the low sublevels carry some non-trivial topology while the high sublevels are contractible and such change of topology jointly with the compactness property of Theorem~\ref{th1.1} (provided $\kappa_{g_v}\notin\Gamma$) detects a critical point. The main step is the study of low sublevels which is done by improved version of the Adams-Trudinger-Moser inequality and suitable test functions.

\medskip

Let us start by pointing out that once the Adams-Trudinger-Moser inequality is available (Theorem~\ref{ATM}), then a standard argument yields improved versions of it under a spreading of the conformal volume $\widetilde Qe^{2mu}$ in, say, $l$ disjoint subsets as expressed in \eqref{spread}. Somehow, it is possible to sum up localized versions of the inequality which are in turn based on cut-off functions and improve the Adams-Trudinger-Moser constant to roughly $l\Lambda_m$. We refer the interested readers for example to Lemma 4.1 in \cite{ndiaye} and references therein.
\begin{lemma} \label{lem-improved}
Let $\delta,\theta>0$, $l\in\mathbb{N}$ and $\Omega_1,\dots,\Omega_l\subset M$ be such that $d(\Omega_i,\Omega_j)>\delta$ for any $i\neq j$. Then, for any $\e>0$ there exists $C=C(\e,\delta,\theta,L,M)$ such that for any $u\in H^m(M)$ such that
\begin{equation} \label{spread}
	\int_{\Omega_i}\dfrac{\widetilde Qe^{2mu}\,dvol_g}{\int_M \widetilde Q\,e^{2mu}\,dvol_g} \geq \theta \qquad \forall i\in\{1,\dots, l\}
\end{equation}
it holds
$$
l\Lambda_m\log \int_M \widetilde Qe^{2m(u-\bar u)}\,dvol_{g} \leq (1+\e)m \int_M uP^{2m}_g u\,dvol_{g} + C,
$$
where $\bar u$ is the average of $u$.
\end{lemma}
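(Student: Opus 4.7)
The plan is to reduce the improved inequality to the standard singular Adams-Trudinger-Moser inequality of Theorem~\ref{ATM} applied to $l$ cut-off copies of $u$, one localized near each $\Omega_i$, and then to combine the $l$ estimates using the spreading hypothesis \eqref{spread}. Since $\alpha_j>0$ for all $j$ in the setting of this section, the minimum in Theorem~\ref{ATM} is $0$, and the constant in front is exactly $\Lambda_m$, so gaining a factor of $l$ will be the point.

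First I would fix smooth cut-off functions $\varphi_1,\dots,\varphi_l$ with pairwise disjoint supports, constructed as follows: since $d(\Omega_i,\Omega_j)>\delta$ for $i\neq j$, choose $\varphi_i\equiv 1$ on the $\delta/4$-neighborhood of $\Omega_i$, $\varphi_i\equiv 0$ outside the $\delta/2$-neighborhood, with $\|\nabla^k\varphi_i\|_\infty\le C(\delta,k)$ for $k\le 2m$. Set $w=u-\bar u$ and apply Theorem~\ref{ATM} to each function $\varphi_i w + \overline{\varphi_i w}$ (or equivalently just to $\varphi_i w$, adjusting averages), obtaining
\begin{equation*}
\Lambda_m\log\int_M\widetilde Q\,e^{2m(\varphi_i w-\overline{\varphi_i w})}\,dvol_g
\leq m\int_M(\varphi_i w)P_g^{2m}(\varphi_i w)\,dvol_g+C.
\end{equation*}
On $\Omega_i$ we have $\varphi_i\equiv 1$, so the left-hand integral is bounded below by $\int_{\Omega_i}\widetilde Q\,e^{2mw}\,dvol_g$ up to a factor $e^{-2m\overline{\varphi_i w}}$, and the latter is controlled by $C\|w\|_{H^m}$ (in fact by a much smaller quantity after an interpolation $\overline{\varphi_i w}\leq C_\e+\e\int uP_g^{2m}u$).

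Next I would invoke the spreading assumption \eqref{spread}, which gives
\begin{equation*}
\int_{\Omega_i}\widetilde Q\,e^{2mw}\,dvol_g\geq \theta\int_M\widetilde Q\,e^{2mw}\,dvol_g,
\end{equation*}
so that each localized inequality upgrades to
\begin{equation*}
\Lambda_m\log\int_M\widetilde Q\,e^{2mw}\,dvol_g \leq m\int_M(\varphi_i w)P_g^{2m}(\varphi_i w)\,dvol_g+C(\e,\delta,\theta,M),
\end{equation*}
where absorbing $\log\theta^{-1}$ and the average correction into $C$ (using the interpolation mentioned above to trade any small multiple of $\int wP_g^{2m}w$ for the eventual $\e$-factor) is routine. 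Summing these $l$ inequalities and dividing by one gives
\begin{equation*}
l\Lambda_m\log\int_M\widetilde Q\,e^{2mw}\,dvol_g \leq m\sum_{i=1}^l\int_M(\varphi_i w)P_g^{2m}(\varphi_i w)\,dvol_g+C.
\end{equation*}

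The main technical obstacle, and the step that produces the $(1+\e)$ factor, is bounding $\sum_i\int_M(\varphi_i w)P_g^{2m}(\varphi_i w)\,dvol_g$ by $(1+\e)\int_M wP_g^{2m}w\,dvol_g$. Because the supports of the $\varphi_i$ are disjoint, a naive expansion (valid if $P_g^{2m}$ were a pure power of $-\Delta_g$ with Leibniz-type identities) would give exactly $\int_M wP_g^{2m}w$ plus commutator terms $[\varphi_i, P_g^{2m}]$ of order at most $2m-1$. One writes out the commutator expansion using the structural form of $P_g^{2m}$ (leading term $(-\Delta_g)^m$ plus lower-order derivatives with smooth coefficients, as recorded in Lemma~\ref{lea.m} and Remark~\ref{rem-local}), and controls the mixed terms by Cauchy-Schwarz combined with the interpolation inequality
\begin{equation*}
\|\nabla^k w\|_{L^2(M)}^2 \leq \e\,\|w\|_{H^m(M)}^2 + C_\e\,\|w\|_{L^2(M)}^2, \qquad k<m,
\end{equation*}
together with the Poincar\'e-type bound $\|w\|_{L^2(M)}^2\leq C\int_M wP_g^{2m}w$ (which uses $P_g^{2m}\geq 0$ and $\mathrm{Ker}\{P_g^{2m}\}=\{\mathrm{constants}\}$, both in force here). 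This absorbs the commutator error into $\e\cdot m\int_M wP_g^{2m}w$ at the cost of enlarging $C$, yielding the claimed inequality. The same scheme is executed in Lemma~4.1 of \cite{ndiaye}; the only adjustment in our singular setting is that Theorem~\ref{ATM} is applied instead of its regular counterpart, which requires no new idea since $\alpha_j>0$.
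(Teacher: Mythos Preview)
Your proposal is correct and follows precisely the approach the paper itself sketches: localize via disjointly supported cut-offs $\varphi_i$, apply Theorem~\ref{ATM} to each $\varphi_i w$, use the spreading hypothesis \eqref{spread} together with $\alpha_j>0$ to recover the full integral with constant $\Lambda_m$ on each piece, sum, and absorb the commutator and average terms into the $(1+\e)$-factor via interpolation and the Poincar\'e-type bound coming from $P_g^{2m}\ge 0$, $\mathrm{Ker}\,P_g^{2m}=\{\text{constants}\}$. The paper does not actually carry out the argument but refers to Lemma~4.1 in \cite{ndiaye}, whose scheme you have reproduced faithfully; your write-up simply supplies the details the paper omits.
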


Improved inequalities then yield lower bounds on the functional $\mathcal E$. As a consequence, in the low sublevels $\widetilde Qe^{2mu}$ has to be concentrated in not too many different subsets, as shown in the following result. 
\begin{lemma}\label{lem-bound}
Suppose $\kappa_{g_v}<(k+1)\Lambda_m$ for some $k\in\mathbb{N}$. Then, $\forall\e,r>0$ there exists $L=L(\e,r)\gg1$ such that $\forall u\in\mathcal E^{-L}$ there exist $k$ points $\{p_1,\dots,p_k\}\subset M$ such that
$$
	\int_{\cup_{i=1}^k B_r^M(p_i)}\dfrac{\widetilde Qe^{2mu}\,dvol_g}{\int_M \widetilde Q\,e^{2mu}\,dvol_g} \geq 1-\e.
$$
\end{lemma}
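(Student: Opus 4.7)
The plan is to argue by contradiction using the improved Adams-Trudinger-Moser inequality in Lemma \ref{lem-improved}. Suppose the conclusion fails for some $\varepsilon, r > 0$: then there exists a sequence $\{u_n\} \subset H^m(M)$ with $\mathcal{E}(u_n) \to -\infty$ such that, for every choice of $k$ points $p_1, \dots, p_k \in M$,
\begin{equation*}
\int_{\cup_{i=1}^k B_r^M(p_i)} \frac{\widetilde Q e^{2mu_n}\, dvol_g}{\int_M \widetilde Q e^{2mu_n}\, dvol_g} < 1 - \varepsilon.
\end{equation*}
First, I invoke a standard measure-splitting lemma (in the spirit of Lemma 2.3 of \cite{dm}): if a probability measure $\mu$ on $M$ fails to be $(1-\varepsilon)$-concentrated on the union of any $k$ geodesic balls of radius $r$, then there exist constants $\theta = \theta(\varepsilon, r, k) > 0$ and $\delta = \delta(\varepsilon, r, k) > 0$, and $k+1$ points $q_1, \dots, q_{k+1} \in M$ with pairwise distance greater than $2\delta$, such that $\mu(B_\delta^M(q_i)) \geq \theta$ for every $i$. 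Applying this to the normalization of $\widetilde Q e^{2mu_n}\, dvol_g$ provides $k+1$ disjoint subsets of $M$, pairwise $\delta$-separated, each carrying at least $\theta$ of the total conformal volume.

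Next, Lemma \ref{lem-improved} applied with $l = k+1$ produces, for any prescribed $\eta > 0$,
\begin{equation*}
(k+1)\Lambda_m \log \int_M \widetilde Q e^{2m(u_n - \bar u_n)}\, dvol_g \leq (1+\eta) m \int_M u_n P_g^{2m} u_n\, dvol_g + C_\eta,
\end{equation*}
where $\bar u_n$ denotes the average. Observe that $\mathcal E$ is invariant under addition of constants: since $P_g^{2m} 1 = 0$ and $\int_M (Q_g^{2m} + \frac{\Lambda_m}{2|M|} \sum_j \alpha_j)\, dvol_g = \kappa_{g_v}$ by (\ref{kappa}), a direct computation shows that the linear and logarithmic contributions cancel. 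We may thus normalize $\bar u_n = 0$.

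Under this normalization, the spectral gap of $P_g^{2m}$ on the zero-average subspace (valid since $P_g^{2m} \geq 0$ and $\mbox{Ker}\{P_g^{2m}\} = \{constants\}$) gives $\|u_n\|_{L^2(M)}^2 \leq C \int_M u_n P_g^{2m} u_n\, dvol_g$. Estimating the linear term in $\mathcal E$ by Cauchy-Schwarz and plugging in the improved inequality,
\begin{equation*}
\mathcal E(u_n) \geq 2m \Big( 1 - \frac{(1+\eta)\kappa_{g_v}}{(k+1)\Lambda_m} \Big) \int_M u_n P_g^{2m} u_n\, dvol_g - C \Big( \int_M u_n P_g^{2m} u_n\, dvol_g \Big)^{1/2} - C.
\end{equation*}
Since $\kappa_{g_v} < (k+1)\Lambda_m$, we may pick $\eta$ small enough to make the leading coefficient strictly positive, so the right-hand side is bounded below by a constant depending only on the data. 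This contradicts $\mathcal E(u_n) \to -\infty$.

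The only delicate point is the covering/splitting step: one must verify that the classical construction yielding $k+1$ disjoint balls of positive mass is not spoiled by the singular weight $\widetilde Q$. Since the assumption $\alpha_j > 0$ forces the singular factor $\prod_j d_g(\cdot, q_j)^{2m\alpha_j}$ in $\widetilde Q$ to vanish rather than blow up at the $q_j$, the measure $\widetilde Q e^{2mu}\, dvol_g$ is absolutely continuous with respect to $dvol_g$ and the Djadli-Malchiodi-type covering argument applies verbatim.
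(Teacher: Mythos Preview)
Your proof is correct and follows essentially the same route as the paper's: contradict the conclusion, invoke the covering lemma of Djadli--Malchiodi to produce $k+1$ separated regions each carrying a fixed fraction of the conformal volume, apply Lemma~\ref{lem-improved} with $l=k+1$, and use $\kappa_{g_v}<(k+1)\Lambda_m$ to make the leading coefficient nonnegative. You supply more detail than the paper's sketch (translation invariance of $\mathcal E$, handling the linear term via the spectral gap and Cauchy--Schwarz), which is fine; the paper simply absorbs these into ``l.o.t.'' Your closing remark about the singular weight is harmless but unnecessary: the covering lemma is purely measure-theoretic and applies to any probability measure on $M$, so nothing about $\widetilde Q$ needs to be checked there.
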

\begin{proof}
We sketch here the proof. Suppose the thesis is false. Then, using a standard covering argument as in Lemma 2.3 of \cite{dm} there exist $k+1$ disjoint subsets $\Omega_1,\dots,\Omega_{k+1}\subset M$ in which $\widetilde Qe^{2mu}$ is spread in the sense of \eqref{spread}. Therefore, applying the improved Adams-Trudinger-Moser inequality of Lemma \ref{lem-improved} we would get a lower bound of the functional
$$
	\mathcal E(u)\geq 2m\left( 1-\dfrac{\kappa_{g_v}}{(k+1)\Lambda_m}(1+\e) \right)\int_M uP^{2m}_g u\,dvol_{g} + {l.o.t.}
$$
By assumption $\kappa_{g_v}<(k+1)\Lambda_m$ and hence we can take a sufficiently small $\e>0$ such that 
$$
1-\frac{\kappa_{g_v}}{(k+1)\Lambda_m}(1+\e)\geq0
$$
which yields $\mathcal E(u)\geq-L$ for some $L\gg1$. This is not possible since we were considering $u\in\mathcal E^{-L}$.
\end{proof}

\medskip

It is then convenient to describe the low sublevels by means of formal barycenters of $M$ of order $k$, that is unit measures supported in at most $k$ points on $M$, defined by
\begin{equation} \label{bary}
	M_k = \left\{ \sum_{i=1}^k t_i\delta_{p_i} \,:\, \sum_{i=1}^k t_i=1,\,t_i\geq0,\,p_i\in M, \ \forall i=1,\dots,k \right\}.
\end{equation} 
The idea is to use a projection within unit measures such that
$$
\dfrac{\widetilde Qe^{2mu}\,dvol_g}{\int_M \widetilde Q\,e^{2mu}\,dvol_g} \mapsto \sigma\in M_k .
$$
This is done exactly as in Proposition 3.1 of \cite{dm} by using Lemma \ref{lem-bound} to get the following.
\begin{proposition}
Suppose $\kappa_{g_v}<(k+1)\Lambda_m$ for some $k\in\mathbb{N}$. Then, there exist $L\gg1$ and a projection $\Psi:\mathcal E^{-L}\to M_k$. 
\end{proposition}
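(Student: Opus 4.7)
The plan is to follow the construction of Proposition 3.1 in \cite{dm}, with minor adaptations to account for the singular weight $\widetilde Q$. To each $u \in H^m(M)$ we associate the probability measure $f_u$ with density $\widetilde Q e^{2mu}/\int_M \widetilde Q e^{2mu}\,dvol_g$ with respect to $dvol_g$, which is well defined by Theorem \ref{ATM}. Note that since $\alpha_j>0$ for every $j$, the weight $\widetilde Q$ is in fact continuous on $M$ and vanishes at each $q_j$, so no extra singular behavior interferes. I would embed $M_k$ into the dual of $\mathrm{Lip}(M)$ via the weak-$*$ topology and equip the ambient space of probability measures with the Kantorovich-Rubinstein distance
\begin{equation*}
\mathrm{dist}(\mu,\nu)= \sup_{\phi \in \mathrm{Lip}(M),\,\|\phi\|_{\mathrm{Lip}}\leq 1} \left|\int_M \phi\,d\mu - \int_M \phi\,d\nu\right|.
\end{equation*}
The projection $\Psi$ will then be realized as the composition of $u\mapsto f_u$ with a neighborhood retraction onto $M_k$ inside this metric space.

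First, I would use Lemma \ref{lem-bound} to show that $\mathrm{dist}(f_u, M_k)\to 0$ uniformly in $u\in\mathcal E^{-L}$ as $L\to+\infty$. Given any $\delta>0$, choose $\varepsilon,r>0$ of order $\delta$ and $L$ large enough so that Lemma \ref{lem-bound} furnishes $k$ points $p_1,\dots,p_k\in M$ with $f_u\bigl(\cup_i B_r^M(p_i)\bigr)\geq 1-\varepsilon$. Letting $t_i=f_u\bigl(B_r^M(p_i)\setminus\cup_{j<i} B_r^M(p_j)\bigr)$ and redistributing the residual mass $1-\sum_i t_i\leq\varepsilon$ arbitrarily among the $t_i$ yields weights $\tilde t_i$ summing to one. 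The measure $\sigma=\sum_{i=1}^k \tilde t_i\delta_{p_i}\in M_k$ then satisfies $\mathrm{dist}(f_u,\sigma)\leq r+2\varepsilon\,\mathrm{diam}(M)<\delta$ by testing against Lipschitz functions $\phi$ with $\|\phi\|_{\mathrm{Lip}}\leq 1$, using the crude bound $|\phi(x)-\phi(p_i)|\leq r$ on $B_r^M(p_i)$.

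Next, I would exploit that $M_k$ is a compact ANR in the above space of probability measures (it is the continuous image of the compact polytope $M^k\times\Delta^{k-1}$ under the natural quotient), so there exist $\delta_0>0$ and a continuous retraction $\rho:\mathcal N_{\delta_0}\to M_k$ of the tubular neighborhood $\mathcal N_{\delta_0}=\{\mu\,:\,\mathrm{dist}(\mu,M_k)<\delta_0\}$. Taking $\delta<\delta_0$ above, set $\Psi=\rho\circ(u\mapsto f_u)$. Continuity of $u\mapsto f_u$ from $H^m(M)$ into the Kantorovich-Rubinstein space follows from Theorem \ref{ATM} together with a Vitali argument: bounded sets in $H^m$ give $L^p$-bounds on $e^{2mu_n}$ for some $p>1$, hence uniform integrability, and pointwise convergence along a subsequence upgrades (after multiplication by $\widetilde Q\in L^\infty$) to $L^1$-convergence, yielding total variation convergence of $f_{u_n}$ to $f_u$.

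The main technical step is the construction of the retraction $\rho$. In the Djadli-Malchiodi setting this is carried out explicitly via a fine covering of $M$ by balls of radius much smaller than $r$, together with a center-of-mass selection inside each cluster of nearby Dirac masses; since $M_k$ as a topological space is insensitive to the divisor $D$ (the singular points $q_j$ are treated as ordinary points of $M$ at this topological stage), exactly the same construction applies here, and I would therefore carry it out by directly transcribing the procedure of Section 3 of \cite{dm}.
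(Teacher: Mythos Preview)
Your proposal is correct and follows essentially the same approach as the paper: both invoke Lemma~\ref{lem-bound} and then defer to the construction of Proposition~3.1 in \cite{dm}, and you in fact supply more of the details (the Kantorovich--Rubinstein distance, the concentration estimate, and the continuity of $u\mapsto f_u$). The only minor difference is that you first phrase the retraction via an abstract ANR argument before pointing to the explicit covering/center-of-mass construction of \cite{dm}; since you acknowledge the latter as the actual implementation, and since the paper itself gives no proof beyond the citation, there is nothing to correct.
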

Recall now that we are assuming there exists a retraction $R:M\to \M$ with $\M\subset M$ a closed $n$-dimensional submanifold, $n\in[1,2m]$, such that $\a_j\notin\M$ for all $j=1,\dots,N$. Let $\M_k$ be the set of formal barycenters of $\M$. We can then define a map $\Psi_R:\mathcal E^{-L}\to \M_k$ simply by considering the composition
$$
	\mathcal E^{-L} \stackrel{\Psi}{\longrightarrow} M_k \stackrel{R_*}{\longrightarrow}  \M_k,
$$
where $R_*$ is the push-forward of measures induced by the retraction $R$. Therefore, we have the following result.
\begin{lemma} \label{lem-psi}
Suppose $\kappa_{g_v}<(k+1)\Lambda_m$ for some $k\in\mathbb{N}$. Then, there exist $L\gg1$ and a continuous map $\Psi_R:\mathcal E^{-L}\to \M_k$.
\end{lemma}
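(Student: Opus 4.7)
The plan is straightforward: compose the projection $\Psi:\mathcal E^{-L}\to M_k$ produced by the preceding proposition with the push-forward of the retraction $R:M\to\M$. The resulting map should automatically be continuous and take values in the formal barycenters of $\M$, so no new analysis is required beyond what is already in place.

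Concretely, I would take $L\gg1$ as in the preceding proposition, so that $\Psi$ is already well-defined and continuous with respect to the weak-$*$ topology on $M_k$ viewed as a subset of probability measures on $M$. I would then define the push-forward $R_*:M_k\to\M_k$ by
$$
R_*\Bigl(\sum_{i=1}^k t_i\,\delta_{p_i}\Bigr)=\sum_{i=1}^k t_i\,\delta_{R(p_i)}.
$$
Since $R(M)\subset\M$ by construction, the image lies in $\M_k$; any collision of atoms (when $R(p_i)=R(p_j)$ for some $i\neq j$) is absorbed by the standard identification built into the definition \eqref{bary} of formal barycenters, so we still obtain a probability measure supported on at most $k$ points of $\M$.

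Continuity of $R_*$ is immediate from the continuity of $R$: for any test function $\varphi\in C(\M)$ one has $\langle R_*\sigma,\varphi\rangle=\langle\sigma,\varphi\circ R\rangle$ with $\varphi\circ R\in C(M)$, so $R_*$ is weak-$*$ continuous. Setting $\Psi_R:=R_*\circ\Psi$ then gives the required map as a composition of continuous maps from $\mathcal E^{-L}$ to $\M_k$.

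I do not anticipate any substantive obstacle; the statement is essentially a formal consequence of the preceding proposition together with the existence of the retraction, which is exactly what Remark \ref{rem-ret} is set up to supply. The only point that deserves a line of care is verifying that the weak-$*$ topologies on $M_k$ and on $\M_k$ are compatible under $R_*$, but this is routine once $R$ is continuous and $\M\subset M$ is closed.
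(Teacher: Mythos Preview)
Your proposal is correct and follows essentially the same approach as the paper: the lemma is stated immediately after the paper defines $\Psi_R$ as the composition $\mathcal E^{-L}\stackrel{\Psi}{\longrightarrow} M_k \stackrel{R_*}{\longrightarrow}\M_k$, with $R_*$ the push-forward of measures induced by the retraction $R$. Your write-up actually supplies slightly more detail than the paper (the explicit formula for $R_*$ and the weak-$*$ continuity argument), but the strategy is identical.
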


\medskip

The low sublevels are thus naturally described (at least partially) by $\M_k$. As a matter of fact, we are going to construct a reverse map, mapping continuously $\M_k$ into $\mathcal E^{-L}$. This is done by suitable test functions on which the functional attains low values. The idea here is that, since $\M$ does not contain conical points $q_j$, we may consider a family of \emph{regular} bubbles centered on $\M$. We thus take a non-decreasing cut-off function $\chi_\delta$ such that
$$
\begin{cases}
	\chi_\delta(t)=t, \quad &t\in[0,\delta], \\
	\chi_\delta(t)=2\delta, \quad &t\geq2\delta,
\end{cases}
$$
let $\lambda>0$ and then define $\Phi:\M_k\to H^m(M)$ by
$$
	\Phi(\sigma)=\varphi_{\lambda,\sigma}, \quad \sigma=\sum_{i=1}^k t_i\delta_{p_i}\in\M_k,
$$
where
$$
	\varphi_{\lambda,\sigma}(y)=\frac{1}{2m}\log\sum_{i=1}^k t_i \left( \dfrac{2\lambda}{1+\lambda^2\chi_\delta^2(d(y,p_i))} \right)^{2m}.
$$
Now, since we are considering bubbles centered on $\M$ which does not contain conical points, we can neglect the effect of the singularities and all the following estimates are carried out exactly as in the regular case, that is Lemma 4.5, Lemma~4.6 in \cite{ndiaye} and  references therein. To avoid technicalities, with a little abuse of notation we will write $o(1)$ to denote quantities which do not necessarily tend to zero but that can be made arbitrarily small. 
\begin{lemma} \label{lem-est}
Let $\varphi_{\lambda,\sigma}$ be as above. Then, for $\lambda\to+\infty$ it holds
\begin{align*}
	\int_M \varphi_{\lambda,\sigma}P^{2m}_g \varphi_{\lambda,\sigma}\,dvol_{g} & \leq  2k\Lambda_m(1+o(1)) \log\lambda,\\
	\int_M \Bigr(Q^{2m}_g+\dfrac{\Lambda_m}{2|M|} \sum_{j=1}^N \alpha_j  \Bigr) \varphi_{\lambda,\sigma}\,dvol_g & = -\kappa_{g_v}(1+o(1)) \log\lambda,\\
	\log\int_M \widetilde Qe^{2m\varphi_{\lambda,\sigma}}\,dvol_g & = O(1).
\end{align*}
\end{lemma}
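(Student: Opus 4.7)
The plan is to carry out standard bubble estimates, exploiting crucially the hypothesis $p_i \in \M$ and $\M \cap \{q_1,\dots,q_N\}=\emptyset$, so that in a fixed neighborhood of each $p_i$ the weight $\widetilde Q$ is smooth and bounded above and below by positive constants. The key reductions are: (a) to split $M$ into the near-regions $B^M_{r_\lambda}(p_i)$ with $r_\lambda=1/\lambda$ and the far region $F_\lambda=M\setminus \bigcup_i B^M_{r_\lambda}(p_i)$; (b) to work in conformal normal coordinates centered at $p_i$ (see Remark \ref{rem-local}) so that $P_g^{2m}$ is expanded via Lemma \ref{lea.m} as $(-\Delta)^m$ plus lower-order perturbations; and (c) to exploit the identity $e^{2m\varphi_{\lambda,\sigma}}=\sum_i t_i\bigl(2\lambda/(1+\lambda^2\chi_\delta^2(d(\cdot,p_i)))\bigr)^{2m}$, which has no cross terms.

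First I would dispatch the third estimate, since it is the easiest. Writing out the exponential sum, one gets $\int_M \widetilde Q e^{2m\varphi_{\lambda,\sigma}}\,dvol_g=\sum_i t_i I_i(\lambda)$ with $I_i(\lambda)=\int_M \widetilde Q \bigl(2\lambda/(1+\lambda^2\chi_\delta^2(d(y,p_i)))\bigr)^{2m}\,dvol_g$. Changing variables $y=\exp_{p_i}(x/\lambda)$ in $B^M_{\delta}(p_i)$ and using that $\widetilde Q$ is continuous and positive near $p_i$, one obtains $I_i(\lambda)\to \widetilde Q(p_i)\int_{\mathbb R^{2m}}\bigl(2/(1+|x|^2)\bigr)^{2m}\,dx\in(0,\infty)$; the far contribution is $O(\lambda^{-2m})$. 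Since $\sum t_i=1$, the integral is pinched between two positive constants, and the third identity follows.

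Next, the second estimate exploits the fact that $\varphi_{\lambda,\sigma}=-\log\lambda+O(1)$ uniformly in $F_\lambda$ (because each $\chi_\delta(d(y,p_i))$ is bounded below by a positive constant there, so each summand in the definition of $\varphi_{\lambda,\sigma}$ behaves like $\lambda^{-2m}$), while $|\varphi_{\lambda,\sigma}|\lesssim \log\lambda$ on the near regions $B^M_{r_\lambda}(p_i)$, whose total volume is $O(\lambda^{-2m})$. Multiplying by the smooth function $Q^{2m}_g+\Lambda_m\sum\alpha_j/(2|M|)$ and integrating yields
$$\int_M\Bigl(Q^{2m}_g+\tfrac{\Lambda_m}{2|M|}\sum\alpha_j\Bigr)\varphi_{\lambda,\sigma}\,dvol_g=-\log\lambda\int_M\Bigl(Q^{2m}_g+\tfrac{\Lambda_m}{2|M|}\sum\alpha_j\Bigr)dvol_g+O(1),$$
and the first integral on the right equals $\kappa_g+\frac{\Lambda_m}{2}\sum\alpha_j=\kappa_{g_v}$ by \eqref{kappa}.

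The first estimate is the main obstacle. In each ball $B^M_{r_\lambda}(p_i)$, in conformal normal coordinates at $p_i$, the dominant term in $\varphi_{\lambda,\sigma}$ is $u_{\lambda,i}(x)=\log\bigl(2\lambda/(1+\lambda^2|x|^2)\bigr)+\frac{1}{2m}\log t_i$, because for $|x|\le 1/\lambda$ the $i$-th summand in the defining sum of $\varphi_{\lambda,\sigma}$ dominates all others by a factor of $\lambda^{2m}$. Using Lemma \ref{lea.m} to replace $P_g^{2m}$ by $(-\Delta)^m$ up to errors with coefficients $O(|x|^2)$ and lower-order derivatives (handled via Lemma \ref{le2.1}-type bounds on $|\nabla^j u_{\lambda,i}|$), one computes on $\mathbb R^{2m}$ the standard identity $\int_{\mathbb R^{2m}} u_{\lambda,i}(-\Delta)^m u_{\lambda,i}\,dx=2\Lambda_m\log\lambda+O(1)$, which is exactly the Dirichlet energy of the standard bubble. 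Outside the near regions, $\varphi_{\lambda,\sigma}$ is essentially constant at $-\log\lambda$ so contributes $o(\log\lambda)$ to the quadratic form. Cross terms between different $p_i$ are controlled by the separation $\min_{i\ne j}d(p_i,p_j)$, which, after a further covering argument as in \cite{dm,ndiaye}, can be absorbed into the $o(1)$ factor (this is where the inequality $\leq$ rather than equality is convenient, since no cancellations need to be tracked). Summing the $k$ near contributions gives the claimed $2k\Lambda_m(1+o(1))\log\lambda$.

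The delicate point throughout is the transition between the near regions and the far region: one must verify that the log-sum structure of $\varphi_{\lambda,\sigma}$ does not spoil the bubble decomposition. This is precisely why $\varphi_{\lambda,\sigma}$ is defined via $\log\sum t_i(\cdots)^{2m}$ rather than a simple sum of bubbles — the former automatically selects the dominant bubble in each near region without interaction, and the cutoff $\chi_\delta$ ensures the far behavior is tame. The fact that $\widetilde Q$ is smooth on $\M$ means no additional conic analysis is required beyond the regular case treated in \cite[Lem.~4.5--4.6]{ndiaye}.
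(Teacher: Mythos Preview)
Your overall strategy coincides with the paper's: the authors do not give a proof but simply observe that, since the bubbles are centered on $\M$ where $\widetilde Q$ is smooth and positive, the estimates reduce verbatim to the regular case treated in \cite[Lemmas~4.5--4.6]{ndiaye}. Your final paragraph makes exactly this reduction, so in spirit the two agree.

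However, your written argument for the second estimate contains a false step. You take $r_\lambda=1/\lambda$ and claim that on $F_\lambda=M\setminus\bigcup_iB^M_{1/\lambda}(p_i)$ one has $\varphi_{\lambda,\sigma}=-\log\lambda+O(1)$ because ``each $\chi_\delta(d(y,p_i))$ is bounded below by a positive constant there''. This is not true: at $d(y,p_i)=1/\lambda$ one has $\chi_\delta(d(y,p_i))=1/\lambda\to0$, the $i$-th summand equals $(2\lambda/2)^{2m}=\lambda^{2m}$, and hence $\varphi_{\lambda,\sigma}\approx+\log\lambda$, not $-\log\lambda$. In fact $\varphi_{\lambda,\sigma}$ sweeps from $+\log\lambda$ to $-\log\lambda$ across the annulus $B^M_\delta(p_i)\setminus B^M_{1/\lambda}(p_i)$, and it is precisely the integration over this transition region that produces the correct coefficient. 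The same misreading of the far region infects your sketch of the first estimate (``outside the near regions $\varphi_{\lambda,\sigma}$ is essentially constant'').

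The fix is to decompose at the fixed scale $\delta$ rather than at $1/\lambda$: on $M\setminus\bigcup_iB^M_\delta(p_i)$ the cutoff is indeed saturated ($\chi_\delta\equiv 2\delta$) and $\varphi_{\lambda,\sigma}=-\log\lambda+O_\delta(1)$; on each $B^M_\delta(p_i)$ one computes directly $\int_{B_\delta}\log\bigl(2\lambda/(1+\lambda^2|x|^2)\bigr)\,dx=-|B_\delta|\log\lambda+O(1)$ and similarly for the energy, exactly as in \cite{dm,ndiaye}. With this correction your outline goes through.
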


\medskip

By the latter estimates we readily get the map we were looking for if we take $\kappa_{g_v}>k\Lambda_m$. Indeed, it is enough to observe that by Lemma \ref{lem-est} we have
$$
	\mathcal E(\Phi(\sigma)) \leq 4m(k\Lambda_m-\kappa_{g_v}+o(1))\log\lambda \to -\infty
$$	
as $\lambda\to+\infty$. Therefore, we can state the following result.
\begin{proposition} \label{prop-phi}
Suppose $\kappa_{g_v}>k\Lambda_m$ for some $k\in\mathbb{N}$. Then, for any $L>0$ there exists $\lambda\gg1$ such that $\Phi:\M_k\to\mathcal E^{-L}$.
\end{proposition}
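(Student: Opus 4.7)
The plan is a direct substitution: given $\sigma = \sum_{i=1}^k t_i \delta_{p_i} \in \M_k$, evaluate the functional $\mathcal E$ in \eqref{functional} on the test function $\varphi_{\lambda,\sigma} = \Phi(\sigma)$ and combine the three estimates of Lemma \ref{lem-est} to get
$$
\mathcal{E}(\varphi_{\lambda,\sigma}) \leq 4m\,k\Lambda_m(1+o(1))\log\lambda - 4m\,\kappa_{g_v}(1+o(1))\log\lambda + O(1) = 4m\bigl(k\Lambda_m-\kappa_{g_v}+o(1)\bigr)\log\lambda + O(1),
$$
as $\lambda\to+\infty$. Since by assumption $\kappa_{g_v} - k\Lambda_m > 0$, the coefficient $k\Lambda_m-\kappa_{g_v}+o(1)$ becomes strictly negative once the $o(1)$ terms are made smaller than $(\kappa_{g_v}-k\Lambda_m)/2$, so that $\mathcal{E}(\varphi_{\lambda,\sigma}) \to -\infty$ as $\lambda\to+\infty$. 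Thus, for any prescribed $L>0$, one fixes $\lambda$ sufficiently large to guarantee $\mathcal E(\varphi_{\lambda,\sigma}) \leq -L$, which gives $\Phi(\sigma) \in \mathcal E^{-L}$ as required.

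The crucial point I would emphasize is the \emph{uniformity} of the estimates of Lemma \ref{lem-est} with respect to $\sigma \in \M_k$. This is where it matters that $\M$ contains no conical point $q_j$: the singular factor $e^{-m\Lambda_m\sum_j \alpha_j G(\cdot,q_j)}$ entering $\widetilde Q$ remains bounded above and below on a neighborhood of $\M$, so the bubbles centered at points of $\M$ only see the regular part of the problem, and the $o(1)$ and $O(1)$ constants can be chosen independently of the configuration $(t_1,\dots,t_k,p_1,\dots,p_k)$. Compactness of $\M_k$ (as a quotient of $\M^k\times\Delta^{k-1}$) then legitimizes the uniform choice of $\lambda$.

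Finally, continuity of $\Phi:\M_k \to H^m(M)$ has to be verified. I would check it directly from the explicit formula
$$
\varphi_{\lambda,\sigma}(y) = \frac{1}{2m}\log \sum_{i=1}^k t_i\left(\frac{2\lambda}{1+\lambda^2\chi_\delta^2(d(y,p_i))}\right)^{2m}
$$
by noting that the right-hand side depends continuously on $(t_i,p_i)$ in $H^m$-norm, and that the $\log\sum$ combination is well-defined on $\M_k$ (i.e.\ it is invariant under permutations and under dropping indices with $t_i=0$, which is what is needed for $\Phi$ to descend to the quotient defining $\M_k$). The main obstacle is really only the uniformity discussed above; once that is in hand, the conclusion $\Phi:\M_k \to \mathcal{E}^{-L}$ is immediate from the choice of $\lambda=\lambda(L)$.
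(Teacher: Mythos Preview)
Your proof is correct and follows essentially the same approach as the paper: plug the three estimates of Lemma \ref{lem-est} into the functional to obtain $\mathcal E(\varphi_{\lambda,\sigma}) \leq 4m(k\Lambda_m-\kappa_{g_v}+o(1))\log\lambda \to -\infty$ and conclude. Your additional remarks on the uniformity in $\sigma\in\M_k$ and on the continuity of $\Phi$ go slightly beyond what the paper spells out explicitly, but they are consistent with the paper's treatment (which refers to the regular case in \cite{ndiaye} for these details).
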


\

We are now in position to prove the existence result.
\begin{proof}[Proof of Theorem \ref{th1.2}]
Suppose $\kappa_{g_v}\in(k\Lambda_m,(k+1)\Lambda_m)$ for some $k\in\mathbb{N}$ and $\kappa_{g_v}\notin\Gamma$, where $\Gamma$ is the critical set given in \eqref{gamma}. The proof is based on a min-max argument relying on the set $\M_k$ which will keep track of the topological properties of the low sublveles of the functional $\mathcal E$, jointly with the compactness property in Theorem~\ref{th1.1}. 

\

\noindent \emph{Step 1.} Recalling Lemma \ref{lem-psi}, let $L\gg1$ be such that there exists a continuous map $\Psi_R:\mathcal E^{-L}\to \M_k$. Then, by Proposition \ref{prop-phi} we can take $\lambda\gg1$ such that $\Phi:\M_k\to\mathcal E^{-L}$. Consider now the composition 
$$\begin{array}{ccccl}
\M_k&\stackrel{\Phi}{\longrightarrow}&\mathcal E^{-L}&\stackrel{\Psi_{R}}{\longrightarrow}& \M_k \hspace{2cm} \\
\sigma&\mapsto&\varphi_{\lambda,\sigma}&\mapsto&\Psi_{R}\left(\frac{\widetilde Qe^{2m\varphi_{\lambda,\sigma}}\,dvol_g}{\int_M \widetilde Q\,e^{2m\varphi_{\lambda,\sigma}}\,dvol_g}\right).
\end{array}$$
It is not difficult to see that the latter composition is homotopic to the identity map on $\M_k$. We have just to notice that, as $\lambda\to+\infty$, $\frac{\widetilde Qe^{2m\varphi_{\lambda,\sigma}}\,dvol_g}{\int_M \widetilde Q\,e^{2m\varphi_{\lambda,\sigma}}\,dvol_g}\rightharpoonup\sigma$ in the sense of measures, that $\Psi$ is a projection and that $R$ is a retraction onto $\M$. The homotopy is thus realized by letting  $\lambda\to+\infty$. As a consequence, if we consider the induced maps between homology groups $H_*$ we get that
\begin{equation} \label{inj}
H_*(\M_k) \hookrightarrow H_*\bigr(\mathcal E^{-L}\bigr) \mbox{ injectively}.
\end{equation}
Now, since $\M$ is a closed manifold, it is well-known that $\M_k$ has non-trivial homology groups and hence, in particular, it is non-contractible. We refer the interested readers for example to Lemma 3.7 in \cite{dm} where the $4$-dimensional case is considered. By the above discussion, this implies 
\begin{equation}\label{non-contr}
	\Phi(\M_k)\subset \mathcal E^{-L} \mbox{ is non-contractible}.
\end{equation}

\medskip

\noindent \emph{Step 2.} We next consider the topological cone over $\M_k$ which is defined through the equivalence relation
$$
	\mathcal C = \dfrac{\M_k\times[0,1]}{\M_k\times\{0\}},
$$
that is $\M_k\times\{0\}$ is collapsed to a single point which is the tip of the cone. We then define the min-max value
$$
	m=\inf_{f\in F} \sup_{\sigma\in\mathcal C} \mathcal E(f(\sigma)),
$$
where
$$
	F=\Bigr\{ f:\mathcal C\to H^m(M) \mbox{ continuous } \,:\, f(\sigma)=\varphi_{\lambda,\sigma}, \ \forall\sigma\in\partial\mathcal C=\M_k \Bigr\},
$$
which is non-empty since $t\Phi\in F$. Still by Proposition \ref{prop-phi} we can take $\lambda\gg1$ sufficiently large such that
$$
	\sup_{\sigma\in\partial\mathcal C} \mathcal E(f(\sigma))= \sup_{\sigma\in\M_k}\mathcal E(\varphi_{\lambda,\sigma})\leq -2L.
$$
On the other hand, we claim that 
$$
	m\geq-L.
$$
To prove it, we just need to observe that $\partial\mathcal C=\M_k$ is contractible in $\mathcal C$ (by construction of the cone) and thus $\Phi(\M_k)$ is contractible in $f(\mathcal C)$ for any $f\in F$. Hence, we deduce by \eqref{non-contr} that $f(\mathcal C)$ can not be contained in $\mathcal E^{-L}$, which proves the claim.

\medskip

We conclude that the functional $\mathcal E$ has a min-max geometry at the level $m$ which in turn implies there exists a Palais-Smale sequence at this level. 

\

\noindent \emph{Step 3.} Since the Palais-Smale condition is not available in this framework, we can not directly pass to the limit to obtain a critical point. To overcome this problem we use the so-called monotonicity trick jointly with the compactness property in Theorem \ref{th1.1}. This argument has been first introduced by Struwe in \cite{struwe} and has been then applied by many authors, see for example \cite{dm, ndiaye}. Therefore, we omit the details and just sketch the main ideas. 

\smallskip

One considers a small perturbation $\mathcal E_\e$ of the functional so that the above min-max scheme applies uniformly. By using a monotonicity property of the perturbed min-max values $m_\e$ it is possible to obtain a bounded Palais-Smale sequence which then converges to a solution of the perturbed problem. We then pass to the limit as $\e\to0$ by using the compactness property in Theorem \ref{th1.1} to eventually recover a solution of the original problem. This concludes the proof.
\end{proof}

\medskip

Finally, we present the proof of the multiplicity result in Theorem \ref{th-mult}.
\begin{proof}[Proof of Theorem \ref{th-mult}]
Once the above analysis (needed to prove the existence result) is carried out, the multiplicity result is essentially a straightforward application of Morse inequalities. Thus, we will be sketchy and refer for example to \cite{bdm,de} for further details. Recall also that $\mathcal E$ is assumed to be a Morse functional. The (weak) Morse inequalities would assert that
\begin{align*}
	\#\{ \mbox{solutions of }\eqref{1.mliouville}\} &\geq \sum_{q\geq0}\#\{ \mbox{critical points of $\mathcal E$ in $\{-L\leq\mathcal E\leq L\}$ with index $q$}\}  \\
	& \geq \sum_{q\geq0} \mbox{{dim} } 	H_{q}(\mathcal E^L, \mathcal E^{-L}),
\end{align*}
where $H_{q}(\mathcal E^L, \mathcal E^{-L})$ stands for the relative homology group of $(\mathcal E^L, \mathcal E^{-L})$, see for example Theorem 2.4 in \cite{de}. Now, it is known that the high sublevels $\mathcal E^L$ are contractible. Roughly speaking, one can take $L\gg1$ sufficiently large so that there are no critical points above the level $L$ which then allows to construct a deformation retract of $\mathcal E^L$ onto $H^m(M)$, which is of course contractible, see for example the argument in \cite{mal}. Then, by the long exact sequence of the relative homology, it easily follows that 
$$
H_{q}(\mathcal E^L, \mathcal E^{-L})\cong \widetilde H_q( \mathcal E^{-L}). 
$$
But we already now from \eqref{inj} that
$$
H_*(\M_k) \hookrightarrow H_*\bigr(\mathcal E^{-L}\bigr) \mbox{ injectively},
$$
thus 
$$
\mbox{{dim} } \widetilde H_q( \mathcal E^{-L})\geq \mbox{{dim} } \widetilde H_q( \M_k)
$$ 
and we are done.
\end{proof}

\

\begin{remark} \label{rem-general}
\emph{As already pointed out, for simplicity, all the argument has been carried out in the case $P_g^{2m}\geq0$. In general, one needs to modify the Adams-Trudinger-Moser inequality and its improvements by adding a bound to the component $u_{-}$ of the function $u$ lying in the direct sum of the negative eigenspaces of $P_g^{2m}$. As a consequence, in the low subleveles $\mathcal E^{-L}$ either the function $u$ concentrates or $u_{-}$ tends to infinity, or both alternative can hold. To express this alternative one can use the topological join (see  \cite{munkres1974topology}) between $\M_k$ and a set representing the negative eigenvalues of $P_g^{2m}$. We refer the interested reader to \cite{dm}.}
\end{remark}

\medskip

\section{Appendix: Pohozaev identity}
Here we state a Pohozaev-type identity which is used in the blow-up argument.

\begin{lemma}
\label{lea.poho}
Let $B_r(0)$ be a ball in $\Rm$, we have the following identities:
\begin{enumerate}	
\item [(a).] If $m=2m_0,~m_0\geq1$, then 
\begin{equation}
\label{a.poho1}
\begin{aligned}
&\int_{B_r(0)}x\cdot\nabla f(-\Delta)^{2m_0} fdx\\	
&=\sum_{i=2}^{m_0}\int_{\partial B_r(0)}
2(i-1)\left((-\Delta)^{m-i}f\frac{\partial(-\Delta)^{i-1}f}{\partial\nu}-\frac{\partial(-\Delta)^{m-i}f}{\partial\nu}(-\Delta)^{i-1}f\right)ds\\
&\quad+\sum_{i=1}^{m_0}\int_{\partial B_r(0)}\left.
(-\Delta)^{m-i}f\partial_{\nu}\langle x,\nabla(-\Delta)^{i-1}f\rangle ds\right.\\
&\quad-\sum_{i=1}^{m_0}\int_{\partial B_r(0)}\langle x,\nabla(-\Delta)^{i-1}f\rangle\frac{\partial(-\Delta)^{m-i}f}{\partial\nu}ds\\
&\quad+\int_{\partial B_r(0)}\frac12|x|((-\Delta)^{m_0}f)^2ds.
\end{aligned}
\end{equation}

\item [(b).] If $m=2m_0+1,~m_0\geq1$, then 	
\begin{equation}
\label{a.poho2}
\begin{aligned}
&\int_{B_r(0)}x\cdot\nabla f(-\Delta)^{2m_0+1} f\\	
&=\sum_{i=2}^{m_0}\int_{\partial B_r(0)}
2(i-1)\left((-\Delta)^{m-i}f\frac{\partial(-\Delta)^{i-1}f}{\partial\nu}-\frac{\partial(-\Delta)^{m-i}f}{\partial\nu}(-\Delta)^{i-1}f\right)ds\\
&\quad+\sum_{i=1}^{m_0}\int_{\partial B_r(0)}
(-\Delta)^{m-i}f\partial_{\nu}\langle x,\nabla(-\Delta)^{i-1}f\rangle ds\\
&\quad -\sum_{i=1}^{m_0}\int_{\partial B_r(0)}\langle x,\nabla(-\Delta)^{i-1}f\rangle\frac{\partial(-\Delta)^{m-i}f}{\partial\nu}ds\\
&\quad+\int_{\partial B_r(0)}\frac12|x|(\nabla(-\Delta)^{m_0}f)^2ds-2m_0\int_{\partial B_r(0)}\partial_{\nu}(-\Delta)^{m_0}f(-\Delta)^{m_0}fds\\
&\quad-\int_{\partial B_r(0)}\partial_{\nu}(-\Delta)^{m_0}f\langle x,\nabla(-\Delta)^{m_0}f\rangle ds.
\end{aligned}
\end{equation}	
\end{enumerate}
Here $\nu$ is outward normal vector along the boundary $\partial B_r(0).$
\end{lemma}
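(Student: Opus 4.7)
My proof plan rests on two ingredients: Green's second identity applied iteratively to transfer powers of $(-\Delta)$ from one factor of the integrand to the other, and the commutator identity
$$[(-\Delta),\,x\cdot\nabla]\,f = 2(-\Delta) f, \qquad \text{hence} \qquad (-\Delta)^{i-1}(x\cdot\nabla f) = x\cdot\nabla\bigl((-\Delta)^{i-1}f\bigr) + 2(i-1)(-\Delta)^{i-1}f,$$
which follows from $\Delta(x_k\partial_k f) = 2\Delta f + x\cdot\nabla \Delta f$ by induction. The plan is to apply Green's identity in the form $\int u(-\Delta)v = \int v(-\Delta)u + \int_{\partial}(v\partial_\nu u - u\partial_\nu v)$ repeatedly to $\int_{B_r}(x\cdot\nabla f)(-\Delta)^m f\,dx$, transferring one factor of $(-\Delta)$ at a time, and then substituting the commutator formula in each resulting boundary integral. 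This substitution cleanly splits each boundary term into the two types appearing in the statement: the $2(i-1)$-coefficient terms come from the commutator correction, while the $\langle x,\nabla(-\Delta)^{i-1}f\rangle$ terms come from the principal part.

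\textbf{Part (a), $m=2m_0$.} I would iterate Green's identity exactly $m_0$ times, producing after the $i$-th step (for $i=1,\ldots,m_0$) a boundary contribution of the form $\int_{\partial B_r}\bigl((-\Delta)^{m-i}f\,\partial_\nu(-\Delta)^{i-1}(x\cdot\nabla f) - \partial_\nu(-\Delta)^{m-i}f\cdot(-\Delta)^{i-1}(x\cdot\nabla f)\bigr)ds$. Applying the commutator identity to each such boundary term splits it into precisely the $i$-th summand of the two sums in \eqref{a.poho1}. After $m_0$ iterations the bulk reduces to $\int_{B_r}(-\Delta)^{m_0}(x\cdot\nabla f)\,(-\Delta)^{m_0}f\,dx$. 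Expanding via the commutator gives $\int_{B_r}\bigl[x\cdot\nabla(-\Delta)^{m_0}f+2m_0(-\Delta)^{m_0}f\bigr](-\Delta)^{m_0}f\,dx$; the first piece is $\tfrac12\int_{B_r}x\cdot\nabla((-\Delta)^{m_0}f)^2\,dx$, and the divergence theorem in $\mathbb{R}^{2m}$ (with $\operatorname{div} x = 2m$) turns it into $-m\int_{B_r}((-\Delta)^{m_0}f)^2+\tfrac{r}{2}\int_{\partial B_r}((-\Delta)^{m_0}f)^2\,ds$. Since $2m_0=m$, the bulk contributions cancel and we are left exactly with $\tfrac12\int_{\partial B_r}|x|((-\Delta)^{m_0}f)^2\,ds$, matching the last boundary term of \eqref{a.poho1}.

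\textbf{Part (b), $m=2m_0+1$.} The same iteration scheme works for the first $m_0$ steps, giving the two main sums on the right-hand side of \eqref{a.poho2} with identical structure. After $m_0$ transfers the bulk term becomes $\int_{B_r}(-\Delta)^{m_0}(x\cdot\nabla f)\cdot(-\Delta)^{m_0+1}f\,dx$, which I would reduce by one further integration by parts using $(-\Delta)v = -\operatorname{div}\nabla v$, producing a bulk term $\int_{B_r}\nabla((-\Delta)^{m_0}(x\cdot\nabla f))\cdot\nabla(-\Delta)^{m_0}f\,dx$ plus the boundary term $-\int_{\partial B_r}\partial_\nu(-\Delta)^{m_0}f\cdot(-\Delta)^{m_0}(x\cdot\nabla f)\,ds$, which after the commutator substitution yields the last two boundary contributions in \eqref{a.poho2}. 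The remaining bulk is handled via $\nabla(x\cdot\nabla(-\Delta)^{m_0}f) = \nabla(-\Delta)^{m_0}f + x\cdot\nabla(\nabla(-\Delta)^{m_0}f)$, so that $\int\nabla(x\cdot\nabla g)\cdot\nabla g = \tfrac12\int x\cdot\nabla|\nabla g|^2 + \int|\nabla g|^2$ with $g=(-\Delta)^{m_0}f$; another divergence-theorem computation produces $\tfrac12\int_{\partial B_r}|x||\nabla(-\Delta)^{m_0}f|^2\,ds$ plus a bulk piece that cancels exactly with the $2m_0$-coefficient contribution from the commutator (using $2m_0+1=m$ and $\operatorname{div} x=2m$).

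\textbf{Main obstacle.} The routine part is the iteration itself; the delicate part is the bookkeeping of signs, coefficients, and the boundary contributions across $m_0$ iterations, in particular verifying that after substituting $(-\Delta)^{i-1}(x\cdot\nabla f) = x\cdot\nabla(-\Delta)^{i-1}f+2(i-1)(-\Delta)^{i-1}f$ in every boundary term, everything collapses to the two clean sums in the statement, and that in the odd case the extra terms involving $\partial_\nu(-\Delta)^{m_0}f$ pair up with the correct signs. Organizing the iteration via a single inductive identity of the form ``after $k$ transfers, $\int_{B_r}(x\cdot\nabla f)(-\Delta)^m f = \int_{B_r}(-\Delta)^k(x\cdot\nabla f)(-\Delta)^{m-k}f + \sum_{i=1}^k [\text{boundary}_i]$'' will keep this bookkeeping under control.
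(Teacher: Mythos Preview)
Your proposal is correct and follows essentially the same approach as the paper: the paper's proof (which omits details) rests on the commutator identity $(-\Delta)\langle x,\nabla(-\Delta)^if\rangle = 2(-\Delta)^{i+1}f+\langle x,\nabla(-\Delta)^{i+1}f\rangle$ together with repeated application of Green's second identity, and your iterated formulation $(-\Delta)^{i-1}(x\cdot\nabla f) = x\cdot\nabla(-\Delta)^{i-1}f + 2(i-1)(-\Delta)^{i-1}f$ is exactly the integrated version of this. Your detailed bookkeeping of the boundary terms and the bulk cancellations (via $\operatorname{div} x = 2m$ with $m=2m_0$ or $m=2m_0+1$) correctly fills in what the paper calls ``straightforward computations''.
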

 
\begin{proof}
The proof is based on the following identity
$$(-\Delta)\langle x,\nabla(-\Delta)^if\rangle
=2(-\Delta)^{i+1}f+\langle x,\nabla(-\Delta)^{i+1}f\rangle.$$
Using the second Green's identity repeatedly, we can get above formula by straightforward computations. We omit the details.
\end{proof}

\

\

\bibliographystyle{acm}
\bibliography{biblio}

\begin{thebibliography}{10}

\bibitem{adams}
{\sc Adams, D.}
\newblock A sharp inequality of {J}. {M}oser for higher order derivatives.
\newblock {\em Ann. of Math. 128\/} (1988), 385--398.

\bibitem{AhmedouWuZhang}
{\sc Ahmedou, A., Wu, L., and Zhang, L.}
\newblock Classification and a priori estimates for the singular prescribing
  {Q}-curvature equation on 4-manifolds.
\newblock {\em J. Funct. Anal. 283\/} (2022), 109646.

\bibitem{aubin2013some}
{\sc Aubin, T.}
\newblock {\em Some nonlinear problems in Riemannian geometry}.
\newblock Springer Science \& Business Media, 2013.

\bibitem{bdm}
{\sc Bartolucci, D., De~Marchis, F., and Malchiodi, A.}
\newblock Supercritical conformal metrics on surfaces with conical
  singularities.
\newblock {\em Int. Math. Res. Not. 24\/} (2011), 5625--5643.

\bibitem{bm}
{\sc Bartolucci, D., and Malchiodi, A.}
\newblock An improved geometric inequality via vanishing moments, with
  applications to singular liouville equations.
\newblock {\em Comm. Math. Phys. 322}, 2 (2013), 415--452.

\bibitem{bt}
{\sc Bartolucci, D., and Tarantello, G.}
\newblock Liouville type equations with singular data and their applications to
  periodic multivortices for the {E}lectroweak {T}heory.
\newblock {\em Comm. Math. Phys. 299}, 1 (2002), 3--47.

\bibitem{bjmr}
{\sc Battaglia, L., Jevnikar, A., Malchiodi, A., and Ruiz, D.}
\newblock A general existence result for the toda system on compact surfaces.
\newblock {\em Adv. Math 285\/} (2015), 937--979.

\bibitem{bjwy}
{\sc Battaglia, L., Jevnikar, Wang, Z., and Yang, W.}
\newblock Prescribing gaussian curvature on surfaces with conical singularities
  and geodesic boundary.
\newblock {\em Ann. Mat. Pura Appl. 202\/} (2023), 1173--1185.

\bibitem{berger}
{\sc Berger, M.}
\newblock On riemannian structures of prescribed {G}aussian curvature for
  compact $2$-manifolds.
\newblock {\em J. Diff. Geom. 5\/} (1971), 325--332.

\bibitem{branson}
{\sc Branson, T.~P.}
\newblock Sharp inequalities, the functional determinant, and the complementary
  series.
\newblock {\em Trans. Amer. Math. Soc. 347}, 10 (1995), 3671--3742.

\bibitem{brendle}
{\sc Brendle, S.}
\newblock Global existence and convergence for a higher order flow in conformal
  geometry.
\newblock {\em Ann. of Math. (2) 158\/} (2003), 323--343.

\bibitem{b}
{\sc Browder, F.~E.}
\newblock On the regularity properties of solutions of elliptic differential
  equations.
\newblock {\em Comm. Pure Appl. Math. 9\/} (1956), 351--361.

\bibitem{BN}
{\sc Buzano, R., and Nguyen, H.~T.}
\newblock The higher-dimensional {C}hern-{G}auss-{B}onnet formula for singular
  conformally flat manifolds.
\newblock {\em J. Geom. Anal. 29}, 2 (2019), 1043--1074.

\bibitem{cao}
{\sc Cao, J.~G.}
\newblock The existence of generalized isothermal coordinates for
  higher-dimensional {R}iemannian manifolds.
\newblock {\em Trans. Amer. Math. Soc. 324}, 2 (1991), 901--920.

\bibitem{cm}
{\sc Carlotto, A., and Malchiodi, A.}
\newblock Weighted barycentric sets and singular {L}iouville equations on
  compact surfaces.
\newblock {\em J. Funct. Anal. 262}, 2 (2012), 409--450.

\bibitem{cy1}
{\sc Chang, S. Y.~A., and Yang, P.~C.}
\newblock Prescribing {G}aussian curvature on {$\mathbb{S}^2$}.
\newblock {\em Acta Math. 159\/} (1987), 215--259.

\bibitem{cy2}
{\sc Chang, S. Y.~A., and Yang, P.~C.}
\newblock Conformal deformation of metric on {$\mathbb{S}^2$}.
\newblock {\em J. Diff. Geom. 27}, 2 (1988), 259--296.

\bibitem{cy3}
{\sc Chang, S. Y.~A., and Yang, P.~C.}
\newblock Extremal metrics of zeta functional determinants on $4$-manifolds.
\newblock {\em Ann. of Math. (2) 142\/} (1995), 171--212.

\bibitem{ChenLi1}
{\sc Chen, W.~X., and Li, C.}
\newblock Prescribing {G}aussian curvature on surfaces with conical
  singularities.
\newblock {\em J. Geom. Anal. 1}, 4 (1991), 359--372.

\bibitem{ChenLi2}
{\sc Chen, W.~X., and Li, C.}
\newblock Gaussian curvature on singular surfaces.
\newblock {\em J. Geom. Anal. 3}, 4 (1993), 315--334.

\bibitem{ChenLi3}
{\sc Chen, W.~X., and Li, C.}
\newblock What kinds of singular surfaces can admit constant curvature?
\newblock {\em Duke Math. J. 78}, 2 (1995), 437--451.

\bibitem{de}
{\sc De~Marchis, F.}
\newblock Generic multiplicity for a scalar field equation on compact surfaces.
\newblock {\em J. Funct. Anal. 259}, 8 (2010), 2165--2192.

\bibitem{dm}
{\sc Djadli, Z., and Malchiodi, A.}
\newblock Existence of conformal metrics with constant {$Q$}-curvature.
\newblock {\em Ann. of Math. 168}, 3 (2008), 813--858.

\bibitem{eremenko}
{\sc Eremenko, A.}
\newblock Co-axial monodromy.
\newblock {\em Ann. Sc. Norm. Super. Pisa, Cl. di Sci. 20}, 2 (2020), 619--634.

\bibitem{FangMa}
{\sc Fang, H., and Ma, B.}
\newblock Constant q-curvature metrics on conic 4-manifolds.
\newblock {\em Adv. in Calc. Var. 15\/} (2022), 235--264.

\bibitem{gjms}
{\sc Graham, C.~R., Jenne, R., Mason, L.~J., and Sparling, G. A.~J.}
\newblock Conformally invariant powers of the {L}aplacian. {I}. {E}xistence.
\newblock {\em J. London Math. Soc. (2) 46}, 3 (1992), 557--565.

\bibitem{GZ}
{\sc Graham, C.~R., and Zworski, M.}
\newblock Scattering matrix in conformal geometry.
\newblock {\em Invent. Math. 152}, 1 (2003), 89--118.

\bibitem{gursky}
{\sc Gursky, M.}
\newblock The principal eigenvalue of a conformally invariant differential
  operator, with an application to semilinear elliptic {PDE}.
\newblock {\em Comm. Math. Phys. 207}, 1 (1999), 131--143.

\bibitem{hlw}
{\sc Hyder, A., Lin, C.~S., and Wei, J.}
\newblock The {$SU(3)$} {T}oda system with multiple singular sources.
\newblock {\em Pacific Journal of Mathematics 305\/} (2020), 645--666.

\bibitem{hmm}
{\sc Hyder, A., Mancini, G., and Martinazzi, L.}
\newblock Local and nonlocal singular {L}iouville equations in {E}uclidean
  spaces.
\newblock {\em Int. Math. Res. Not. 2021\/} (2021), 11393--11425.

\bibitem{kw}
{\sc Kazdan, J.~L., and Warner, F.~W.}
\newblock Curvature functions for compact $2$-manifolds.
\newblock {\em Ann. of Math. 99\/} (1974), 14--47.

\bibitem{LeeParker}
{\sc Lee, J.~M., and Parker, T.~H.}
\newblock The {Y}amabe problem.
\newblock {\em Bull. Amer. Math. Soc. (N.S.) 17}, 1 (1987), 37--91.

\bibitem{LinWei}
{\sc Lin, C.~S., and Wei, J.}
\newblock Sharp estimates for bubbling solutions of a fourth order mean field
  equation.
\newblock {\em Ann. Sc. Norm. Super. Pisa Cl. Sci. 6}, 4 (2007), 561--597.

\bibitem{LuoTian}
{\sc Luo, F., and Tian, G.}
\newblock Liouville equation and spherical polytopes.
\newblock {\em Proc. A.M.S. 116}, 4 (1992), 1119--1129.

\bibitem{mal}
{\sc Malchiodi, A.}
\newblock Morse theory and a scalar field equation on compact surfaces.
\newblock {\em Adv. Diff. Eq. 13\/} (2008), 1109--1129.

\bibitem{mr}
{\sc Malchiodi, A., and Ruiz, D.}
\newblock New improved {M}oser-{T}rudinger inequalities and singular
  {L}iouville equations on compact surfaces.
\newblock {\em Geom. Funct. Anal. 21}, 5 (2011), 1196--1217.

\bibitem{m2}
{\sc Martinazzi, L.}
\newblock Classification of solutions to the higher order {L}iouville's
  equation on {$\mathbb R^{2m}$}.
\newblock {\em Math. Z. 263}, 2 (2009), 307--329.

\bibitem{m1}
{\sc Martinazzi, L.}
\newblock Concentration-compactness phenomena in the higher order {L}iouville's
  equation.
\newblock {\em J. Funct. Anal. 256}, 11 (2009), 3743--3771.

\bibitem{MazzeoZhu}
{\sc Mazzeo, R., and Zhu, X.}
\newblock Conical metrics on {R}iemann surfaces i: The compactified
  configuration space and regularity.
\newblock {\em Geometry and Topology 24}, 1 (2020), 309--372.

\bibitem{McOwen}
{\sc McOwen, R.~C.}
\newblock Prescribed curvature and singularities of conformal metrics on
  riemann surfaces.
\newblock {\em J. Math. Anal. Appl. 177}, 1 (1993), 287--298.

\bibitem{MondPanov1}
{\sc Mondello, G., and Panov, D.}
\newblock Spherical metrics with conical singularities on a 2-sphere: angle
  constraints.
\newblock {\em Int. Math. Res. Not. 2016}, 16 (2016), 4937--4995.

\bibitem{MondPanov2}
{\sc Mondello, G., and Panov, D.}
\newblock Spherical surfaces with conical points: systole inequality and moduli
  spaces with many connected components.
\newblock {\em Geom. Funct. Anal. 29\/} (2019), 1110--1193.

\bibitem{munkres1974topology}
{\sc Munkres, J.~R.}
\newblock {\em Topology: a first course.}
\newblock Prentice-Hall, 1975.

\bibitem{ndiaye}
{\sc Ndiaye, C.~B.}
\newblock Constant {$Q$}-curvature metrics in arbitrary dimension.
\newblock {\em J. Funct. Anal. 251}, 1 (2007), 1--58.

\bibitem{ndiaye2}
{\sc Ndiaye, C.~B.}
\newblock Sharp estimates for bubbling solutions to some fourth-order geometric
  equations.
\newblock {\em International Mathematics Research Notices 2017}, 3 (2017),
  643--676.

\bibitem{pan2}
{\sc Paneitz, S.}
\newblock Essential unitarization of symplectics and applications to field
  quantization.
\newblock {\em J. Funct. Anal. 48\/} (1982), 310--359.

\bibitem{pan1}
{\sc Paneitz, S.}
\newblock A quartic conformally covariant differential operator for arbitrary
  pseudo{R}iemannian manifolds (summary).
\newblock {\em Symmetry Integrability Geom. Methods Appl. 4\/} (2008).

\bibitem{struwe}
{\sc Struwe, M.}
\newblock The existence of surfaces of constant mean curvature with free
  boundaries.
\newblock {\em Acta Math. 160}, 1/2 (1988), 19--64.

\bibitem{troy}
{\sc Troyanov, M.}
\newblock Prescribing curvature on compact surfaces with conical singularities.
\newblock {\em Trans. Amer. Math. Soc. 324\/} (1991), 793--821.

\bibitem{WeinZhang}
{\sc Weinstein, G., and Zhang, L.}
\newblock The profile of bubbling solutions of a class of fourth order
  geometric equations on 4-manifolds.
\newblock {\em J. Funct. Anal. 257\/} (2009), 3895--3929.

\end{thebibliography}

\end{document}